\numberwithin{equation}{section}
\newtheorem{Theorem}{Theorem}[section]
\newtheorem*{Theorem*}{Theorem}
\newtheorem{Corollary}[Theorem]{Corollary}
\newtheorem{Proposition}[Theorem]{Proposition}
 { \theoremstyle{definition}

\newtheorem{Remark}[Theorem]{Remark} }
\begin{document}
\allowdisplaybreaks

\renewcommand{\PaperNumber}{087}

\FirstPageHeading

\ShortArticleName{Bilateral Two-Parameter Mock Theta Functions and Related Applications}

\ArticleName{Bilateral Two-Parameter Mock Theta Functions\\ and Related Applications}

\Author{Chun WANG}

\AuthorNameForHeading{C.~Wang}

\Address{Department of Mathematics, Shanghai Normal University, Shanghai, 200234, P.R.~China}
\Email{\href{wangchun@shnu.edu.cn}{wangchun@shnu.edu.cn}}

\ArticleDates{Received January 20, 2025, in final form October 06, 2025; Published online October 17, 2025}

\Abstract{In this paper, we investigate new relationships for bilateral series related to two-parameter mock theta functions, which lead to many identities concerning the bilateral mock theta functions. In addition, interesting relations between the classical mock theta functions and the bilateral series are also concluded.}

\Keywords{mock theta functions; theta series; bilateral series; Appell--Lerch sums}

\Classification{33D15; 11F27}

\section{Introduction}
Throughout the paper, we adopt the standard $q$-series notation in \cite{GR}.
For two complex variables~$a$ and $q$ with $|q|<1$ and $n\in \mathbb{Z}\cup \{\infty\}$, the shifted factorials of $a$ with base $q$ are defined~by
\begin{align*}
(a;q)_{\infty}:=\prod_{j=0}^{\infty}\bigl(1-aq^{j}\bigr) \qquad \text{and} \qquad
(a;q)_{n}:=\frac{(a;q)_{\infty}}{(aq^n;q)_{\infty}},
\end{align*}
specifically,
\begin{align*}
(a;q)_n:=
      \begin{cases}
       1, & \text{for $n=0$,} \\
       (1-a)(1-aq)\cdots\bigl(1-aq^{n-1}\bigr), & \text{for $n>0$,} \\
      1/\bigl(1-aq^{-1}\bigr)\bigl(1-aq^{-2}\bigr)\cdots\bigl(1-aq^n\bigr), & \text{for $n<0$.}
      \end{cases}
\end{align*}
For convenience, we use the shorthand notation
\begin{align*}
(a_1,a_2,\ldots,a_m;q)_n:=(a_1;q)_n(a_2;q)_n\cdots(a_m;q)_n,
\end{align*}
where $m$ is a positive integer.

The bilateral basic hypergeometric series is defined as follows.
\begin{align}\label{psi}
_r\psi_s\left[{a_1,a_2,\dots,a_r\atop b_1,b_2,\dots,b_s};q,z\right]
:=\sum_{n=-\infty}^\infty\frac{(a_1,a_2,\dots,a_r;q )_n}
{(b_1,b_2,\dots,b_s;q)_n}{\big[(-1)^{n}q^{n\choose2}\big]}^{s-r}z^n.
\end{align}
The series \eqref{psi} diverges for $z=0$ if $s < r$ and converges for $|b_1\cdots b_s/a_1\cdots a_r|<
|z| < 1$ if $r = s$.

One of Ramanujan's famous discoveries is the $_1\psi_1$ summation \cite[equation (5.2.1)]{GR}
\begin{align}\label{0-5}
{_1\psi_1}\left[{a \atop b};q,z\right]
=\frac{(q,b/a,az,q/az;q)_{\infty}}{(b,q/a,z,b/az;q)_{\infty}},
\end{align}
where $|b/a| < |z| < 1$.  The $_1\psi_1$ summation plays
important roles in $q$-series and mock modular forms (see \cite{AB,B,K}).

Taking $z\mapsto z/a$ and then $a\rightarrow\infty$, $b=0$ in \eqref{0-5} yields the well-known Jacobi's triple product identity $j(z;q)$, namely,
\begin{align*}
j(z;q):=(z,q/z,q;q)_{\infty}=\sum_{n=-\infty}^{\infty}(-1)^nq^{n\choose2}z^n.
\end{align*}
For brevity, we define $J_m:=j\bigl(q^m;q^{3m}\bigr)=(q^m;q^m)_\infty$.

In his last letter to Hardy, written three months before his death in 1920, Ramanujan \cite{R} introduced 17 functions which he called ``mock theta functions" and assigned them with the orders three, five, and seven. He did not explain precisely what he meant by a mock theta function, however, he referred to them as ``mock" because they imitated theta functions but did not fit into the established framework of modular forms. Here what Ramanujan referred to as ``theta" functions corresponds to what we now call weakly holomorphic modular forms. Ramanujan's
statements were interpreted by Andrews and Hickerson \cite{AH} to mean a mock theta function $f(q)$ is a function defined by a $q$-series which converges for $|q| < 1$ and which satisfies the following two conditions:
\begin{itemize}\itemsep=0pt
\item[(0)] For every root of unity $\zeta$, there is a theta function $\vartheta_\zeta(q)$ such that
 the difference $f(q)-\vartheta_\zeta(q)$ is bounded as ~$q\rightarrow\zeta$ radially.

\item[(1)] There is no single theta function which works for all $\zeta$: i.e., for every
 theta function $\theta(q)$ there is some root of unity $\zeta$ for which ~$f(q)-\vartheta_(q)$ is
 unbounded as $q\rightarrow\zeta$ radially.
\end{itemize}
A similar definition was given by Gordon and McIntosh \cite{GM1,GM}, where they
also distinguish between a mock theta function and a ``strong" mock theta function.
The modern view of mock theta functions is based on the work of Zwegers \cite{Z,Z1},
who showed that the mock theta functions are the holomorphic parts of certain weight 1/2 harmonic Maass forms and the order is related to the conductor of the Nebentypus of these modular objects. And also from Bringmann and Ono \cite{BO1,BO2}, we know that specializations of Appell--Lerch sums give rise to mock theta functions. The definition of Appell--Lerch sum is given by
\begin{align*}%\label{0-0}
m(x,q,z):=\frac{-z}{j(z;q)}\sum_{r=-\infty}^{\infty}
\frac{(-1)^{r}q^{r+1\choose 2}z^r}{1-q^rxz},
\end{align*}
where $x,z\in\mathbb{C}^{*}=\mathbb{C}-\{0\}$ with neither $z$ nor $xz$ an integral power of $q$.

During the past decades, the topic in terms of mock theta functions has attracted a lot of attention and has been studied widely by lots of mathematicians. For example, Andrews and Hickerson \cite{AH} proved eleven identities involving the sixth-order mock
theta functions which were listed in Ramanujan's lost notebook. Berndt and Chan
 \cite{BC} defined two new sixth-order mock theta functions and then provided four transformation formulas relating these two mock theta functions with Ramanujan's sixth-order mock theta functions. Gordon and McIntosh~\cite{GM2} constructed eight eighth-order mock theta
functions from the Rogers--Ramanujan type identities. Moreover, Ramanujan's lost notebook \cite{R} recorded ten identities, five for each of the two families, satisfied by the fifth-order mock theta functions, which have come to be known as the ``mock theta conjectures''. In \cite{AG}, Andrews and Garvan included that these identities in each family are equivalent.
Hickerson \cite{H} first confirmed these conjectures with the aid of the two-parameter mock theta function
\begin{align}\label{g3}
g_3(x,q):=\sum_{n=0}^{\infty}\frac{q^{n^2+n}}{(x,q/x;q)_{n+1}}.
\end{align}
In \cite{GM}, Gordon and McIntosh discussed how the mock theta functions with odd order can be expressed by $g_3(x,q)$,
and the even-order mock theta functions are related to $g_2(x,q)$ which is defined as
\begin{align*}
g_2(x,q):=\sum_{n=0}^{\infty}\frac{(-q;q)_nq^{n+1\choose2}}
{(x,q/x;q)_{n+1}}.
\end{align*}
Therefore, the functions $g_3(x,q)$ and $g_2(x,q)$ are called universal mock theta functions aptly. For more detailed introduction to the developments of mock theta functions, we refer the reader to see \cite{GM}.

The motivation for the present paper is the observation that
many of the mock theta functions are special cases of one ``side" ($n \geq 0$ or $n < 0$)
of certain general bilateral series. In \cite{W2}, Watson mentioned that the general term of the fifth mock theta function, when written in the following form
\[
f_{0}(q):=\sum_{n=0}^{\infty}\frac{q^{n^2}}{(-q;q)_n},
\]
exists for negative as well as for positive values of $n$; and this suggests the study of the complete series
\begin{align}\label{bf}
\sum_{n=-\infty}^{\infty}\frac{q^{n^2}}{(-q;q)_n}.
\end{align}
Obviously,
\begin{align*}
\sum_{n=-\infty}^{\infty}\frac{q^{n^2}}{(-q;q)_n}&=f_0(q)+\sum_{n=0}^{\infty}(-q;q)_{n-1}q^{n(n+1)/2}=f_0(q)+2\psi_{0}(q).
\end{align*}
He said the most curious feature of the complete series, however, is the phenomenon that, when we take the constituent parts, $f_{0}(q)$ and $2\psi_{0}(q)$ respectively, the odd parts of the constituents of them
 not only are expressible in terms of theta functions, but the latter is exactly double of the former.

In this paper, when we extend the defining series of a mock theta function to the complete series form as above, we have bilateral mock theta functions. We shall adopt the following notation for brevity: for a mock theta function
$N(q):=\sum_{n=0}^{\infty}a(n,q)$,
its bilateral series is defined by
$N_c(q):=\sum_{n=-\infty}^{\infty}a(n,q)$.
Thus, the bilateral fifth-order mock theta function defined in \eqref{bf} can be stated as $f_{0,c}(q)$.
 We also list the relation in terms of the third-order mock theta function $\omega(q)$ and its corresponding bilateral series $\omega_c(q)$ to make the notation more clear,
\begin{align}\label{0-1}
\omega_c(q)=\sum_{n=-\infty}^{\infty}\frac{q^{2n^2+2n}}{\bigl(q;q^2\bigr)_{n+1}^2}
=\sum_{n=0}^{\infty}\frac{q^{2n^2+2n}}{\bigl(q;q^2\bigr)_{n+1}^2}
+\sum_{n=0}^{\infty}\bigl(q;q^2\bigr)_n^2q^{2n}
=\omega(q)+\frac{J_1^2}{J_2^2}D_5(q),
\end{align}
where
\begin{align*}
D_5(q):=\frac{1}{\bigl(q;q^2\bigr)_{\infty}^2}\sum_{n=0}^{\infty}\bigl(q;q^2\bigr)_n^2q^{2n}
\end{align*}
is the mock theta functions researched by Hikami \cite{Hi}.

There also exist other kinds of bilateral series associated with mock theta functions. For instance, Choi \cite{Ch} investigated generalizations of classical mock theta functions by adding extra free parameters. Indeed, he developed the relations between the bilateral basic hypergeometric series and mock theta functions. Bajpai et al.\ \cite{BKLMR} and Mortenson \cite{Mo} presented radial limits results by using bilateral series of mock theta functions. In \cite{Mc}, Mc~Laughlin derived various transformations and summation formulae for some mock theta functions and the corresponding bilateral series. In \cite{HSZ}, Hu et al.\ deduced the Appell--Lerch sums for bilateral series associated with the third-order mock theta functions. Wei et al.\ \cite[Corollaries~2.1 and 2.2]{WYH} concluded two identities of bilateral mock theta functions in view of the transformation lemma from Andrews~\cite{A}.\looseness=-1

Inspired by the aforementioned work, this paper investigates bilateral mock theta functions from the perspective of bilateral two-parameter mock theta functions. The main results will be presented in the subsequent section. The remainder of the paper is structured as follows: Section \ref{sec1} is devoted to proving our theorems by means of Appell--Lerch sums. Additionally, in Appendix \ref{appendixA}, two of our main theorems are supplemented with alternative proofs, more specifically, Theorem \ref{3+2} is reproven herein via bilateral basic hypergeometric series, while Theorem~\ref{5+1} is reproven using a transformation lemma due to Andrews \cite{A}. Moreover, in Appendix \ref{appendixB}, the identities contained in the derived corollaries are categorized according to the orders of classical mock theta functions.

\section{Main results}

This section presents several relationships pertaining to bilateral two-parameter mock theta functions, along with a demonstration of how identities involving bilateral mock theta functions and classical identities may be derived from these relationships. Such as, for the bilateral two-parameter mock theta functions $g_{3,c}(x,q)$, which is the complete form of $g_3(x,q)$ defined in \eqref{g3}, we have a direct relation:
\begin{Theorem}\label{bg3-1}
We have
\begin{align} \label{bg3}
j\bigl(-xq;q^2\bigr)g_{3,c}\bigl(xq,q^2\bigr)+j\bigl(xq;q^2\bigr)g_{3,c}\bigl(-xq,q^2\bigr)=\frac{4J_4^5}{J_2^2j\bigl(x^2q^2;q^4\bigr)}.
\end{align}
\end{Theorem}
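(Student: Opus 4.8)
The plan is to reduce the left-hand side to a pure theta quotient by representing each bilateral series $g_{3,c}(\pm xq,q^2)$ through Appell--Lerch sums and then exploiting the symmetrization in $x\mapsto-x$. First I would record two elementary reductions. Shifting the summation index $n\mapsto m=n+1$ rewrites the bilateral series in the cleaner symmetric shape
\begin{align*}
g_{3,c}(x,q)=\sum_{m=-\infty}^{\infty}\frac{q^{m^2-m}}{(x;q)_m(q/x;q)_m},
\end{align*}
and one notes that $x\mapsto-x$ simply interchanges the two summands on the left, so the left-hand side is even in $x$, consistent with the right-hand side depending only on $x^2$. Second, splitting $(xq;q^2)_\infty(-xq;q^2)_\infty=(x^2q^2;q^4)_\infty$ and likewise for $q/x$, together with Jacobi's triple product, yields the product rule
\begin{align*}
j\bigl(xq;q^2\bigr)\,j\bigl(-xq;q^2\bigr)=\frac{J_2^2}{J_4}\,j\bigl(x^2q^2;q^4\bigr),
\end{align*}
so that the target right-hand side is equivalently $4J_4^4/[j(xq;q^2)j(-xq;q^2)]$; I would aim directly for this cleaner form.

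The technical heart is a lemma expressing $g_{3,c}(x,q)$ in terms of Appell--Lerch sums. I would derive it by multiplying the series by $j(x;q)$, using $(x;q)_\infty/(x;q)_m=(xq^m;q)_\infty$ to telescope the denominators into
\begin{align*}
j(x;q)\,g_{3,c}(x,q)=J_1\sum_{m=-\infty}^{\infty}q^{m^2-m}\,(xq^m;q)_\infty\bigl(q^{m+1}/x;q\bigr)_\infty,
\end{align*}
then expanding the two infinite products by Euler's expansion $(a;q)_\infty=\sum_{i\ge0}(-1)^iq^{i\choose2}a^i/(q;q)_i$ and summing the resulting inner series over $m$ by Jacobi's triple product, which contributes a theta factor $j(-q^{i+j};q^2)$. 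This produces a Hecke-type double sum which, by the standard reduction of such sums to Appell--Lerch sums, yields a representation $g_{3,c}(x,q)=c\,m(\alpha,q^\nu,\zeta)+(\text{theta quotient})$ with explicit parameters; the one-sided $g_3$ supplies only the ``mock'' tail, whereas the bilateral completion is precisely what realizes the full Appell--Lerch sum, which is why the bilateralization is the right object. This identification, with the exact constants and parameters, is the step I expect to be the main obstacle, since tracking the quasi-periodicity factors $j(q^kz;q)=(-1)^kq^{-{k\choose2}}z^{-k}j(z;q)$ through the double sum is delicate.

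With the lemma in hand I would specialize $x\mapsto xq$ and $x\mapsto-xq$ at base $q^2$, multiply respectively by $j(-xq;q^2)$ and $j(xq;q^2)$, and add. The two theta quotients combine by triple-product manipulations, while the two Appell--Lerch contributions differ only through their auxiliary parameter $\zeta$ and are collapsed into a single theta quotient by the change-of-$z$ identity for $m(x,q,z)$; this is the mechanism by which the genuinely mock parts cancel, leaving a modular answer. Finally I would simplify the resulting theta quotient, using the product rule above together with Jacobi's triple product, to match $4J_4^4/[j(xq;q^2)j(-xq;q^2)]$, equivalently the stated $4J_4^5/[J_2^2\,j(x^2q^2;q^4)]$; the numerical factor $4$ and the power $J_4^5$ are exactly the footprints of this collapse. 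The subsidiary obstacle here is ensuring the mock parts cancel \emph{exactly}, that is, that the residual $z$-parameters are matched so that only the theta term of the change-of-$z$ formula survives.
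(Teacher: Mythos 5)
Your architecture is exactly the paper's own proof: represent each bilateral series by an Appell--Lerch sum, observe that the two sums share the same first two arguments $\bigl(x^2,q^2\bigr)$ and differ only in the auxiliary parameter $z$, and collapse their difference into a theta quotient by the change-of-$z$ formula. Everything downstream of your key lemma is correct and coincides with the paper, which applies \eqref{2-5} at $x$ and $-x$ and then \eqref{2-6} with $z=q/x$, $z_0=-q/x$; using $j\bigl(-1;q^2\bigr)=j\bigl(-q^2;q^2\bigr)=2J_4^2/J_2$ and $j\bigl(q/x;q^2\bigr)=j\bigl(xq;q^2\bigr)$ this yields precisely your target form $4J_4^4/\bigl[j\bigl(xq;q^2\bigr)j\bigl(-xq;q^2\bigr)\bigr]$, and your preliminary reductions (the index shift, the product rule $j\bigl(xq;q^2\bigr)j\bigl(-xq;q^2\bigr)=J_2^2\,j\bigl(x^2q^2;q^4\bigr)/J_4$, the evenness in $x$) are all correct.

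The genuine gap is the lemma itself, which you flag as the main obstacle but never establish, and the route you sketch for it would not go through as described. Your telescoping plus Euler expansion plus triple product gives
\begin{align*}
j(x;q)g_{3,c}(x,q)=J_1\sum_{i,j\ge0}\frac{(-1)^{i+j}q^{\binom{i}{2}+\binom{j}{2}+j}x^{i-j}}{(q;q)_i(q;q)_j}\,j\bigl(-q^{i+j};q^2\bigr),
\end{align*}
which is \emph{not} a Hecke-type double sum in the Hickerson--Mortenson sense: the denominators $(q;q)_i(q;q)_j$ and the internal theta factor survive, so the ``standard reduction of such sums to Appell--Lerch sums'' does not apply off the shelf, and extracting the exact constants and parameters from this expression is a substantial project in its own right. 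What you actually need is the precise statement \eqref{2-5}, namely $qg_{3,c}\bigl(xq,q^2\bigr)=-xj\bigl(xq;q^2\bigr)m\bigl(x^2,q^2,q/x\bigr)/J_2$ --- note it is a \emph{pure} multiple of a single $m$-function, with no additive theta correction --- which is a published identity of Mortenson that the paper simply cites rather than proves. If you cite it as well, your argument is complete and is essentially identical to the paper's three-line proof; if you insist on deriving it from scratch, your proposal as written does not do so.
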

Let $\omega$ denote a primitive cube root of 1. Letting $(q,x)\rightarrow(q,1), (q,\omega), \bigl(q^3,q\bigr)$ in \eqref{bg3} gives the following identities associated with the bilateral series in terms of the third-order mock theta functions $\omega(q)$, $\rho(q)$, $\sigma(q)$ and the sixth-order $\beta(q)$ defined as follows (cf.\ \cite{GM1,GM,R})
\begin{gather*}
\rho(q):=\sum_{n=0}^{\infty}\frac{\bigl(q;q^2\bigr)_{n+1}q^{2n^2+2n}}{\bigl(q^3;q^6\bigr)_{n+1}},\qquad
\sigma(q):=\sum_{n=1}^{\infty}\frac{q^{3n^2-3n}}{\bigl(-q,-q^2;q^3\bigr)_n},\\
 \beta(q):=\sum_{n=0}^{\infty}\frac{q^{3n^2+3n+1}}{\bigl(q,q^2;q^3\bigr)_{n+1}}.
\end{gather*}
\begin{Corollary} \label{coro1}
We have
\begin{gather*}
\omega_c(q)+\frac{J_1^4J_4^2}{J_2^6}\omega_c(-q)=4\frac{J_1^2J_4^8}{J_2^9},\qquad
\rho_c(q)+\frac{J_2^3J_3^2J_{12}}{J_1^2J_4J_6^3}\rho_c(-q)=4\frac{J_3J_4^2J_{12}^2}{J_1J_6^3},\\
\sigma_c(q)+\frac{J_2J_3^2}{qJ_1^2J_6}\beta_c(q)=4\frac{J_6^5}{J_1J_2J_3^2}.
\end{gather*}
\end{Corollary}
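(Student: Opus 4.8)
The plan is to obtain all three identities by substituting the three parameter choices into \eqref{bg3} and reducing every factor to the $J_m$ notation; beyond Theorem~\ref{bg3-1} the only real work is a dictionary that rewrites the specialized functions $g_{3,c}$ and the theta quotients in terms of the named bilateral mock theta functions and of products $J_m$.

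On the mock theta side I would argue as follows. Since $g_{3,c}(x,q)=\sum_{n=-\infty}^{\infty}q^{n^2+n}/(x,q/x;q)_{n+1}$, the choice $x=1$ gives $(q,q;q^2)_{n+1}=(q;q^2)_{n+1}^2$, so $g_{3,c}(q,q^2)=\omega_c(q)$ by \eqref{0-1} and, flipping the sign of $q$, $g_{3,c}(-q,q^2)=\omega_c(-q)$. For $x=\omega$ the key tool is the cube-root factorization $(1-a)(1-\omega a)(1-\omega^2 a)=1-a^3$ applied to each factor $1-q^{2k+1}$: it collapses $(\omega q,\omega^2 q;q^2)_{n+1}$ to $(q^3;q^6)_{n+1}/(q;q^2)_{n+1}$, which turns $g_{3,c}(\omega q,q^2)$ into $\rho_c(q)$ and, after the sign flip, $g_{3,c}(-\omega q,q^2)$ into $\rho_c(-q)$. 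For $(q,x)\to(q^3,q)$ no factorization is needed: a direct comparison of Pochhammer symbols (with a shift of summation index for $\sigma$) gives $g_{3,c}(q^4,q^6)=q^{-2}\beta_c(q^2)$ and $g_{3,c}(-q^4,q^6)=\sigma_c(q^2)$.

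On the theta side I would evaluate each $j$ in \eqref{bg3} by Jacobi's triple product $j(z;q)=(z,q/z,q;q)_\infty$ together with the elementary evaluations $(q;q^2)_\infty=J_1/J_2$ and $(-q;q^2)_\infty=J_2^2/(J_1J_4)$, and, for the root-of-unity arguments, the same cube-root factorization taken to $n=\infty$. This records closed forms such as $j(q;q^2)=J_1^2/J_2$, $j(-q;q^2)=J_2^5/(J_1^2J_4^2)$, $j(q^2;q^4)=J_2^2/J_4$ in the first case; $j(\omega q;q^2)=J_2^2J_3/(J_1J_6)$, $j(-\omega q;q^2)=J_1J_4J_6^2/(J_2J_3J_{12})$, $j(\omega^2q^2;q^4)=J_4^2J_6/(J_2J_{12})$ in the second; and the pleasant simplifications $j(q^4;q^6)=J_2$, $j(-q^4;q^6)=J_4J_6^2/(J_2J_{12})$, $j(q^8;q^{12})=J_4$ in the third.

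Finally I would assemble each identity by dividing \eqref{bg3} through by the coefficient of the ``$+q$'' function, so that this function carries coefficient $1$; the coefficient of its companion and the right-hand side then collapse to the stated $J_m$-quotients. The first two choices land directly on the corollary, whereas $(q,x)\to(q^3,q)$ first produces an identity in $\sigma_c(q^2)$ and $\beta_c(q^2)$ with moduli $J_2,J_4,J_6,J_{12}$, and a final rescaling $q^2\mapsto q$ converts it into the displayed identity in $\sigma_c(q),\beta_c(q)$. I expect the obstacle to be bookkeeping rather than ideas: reducing every root-of-unity theta value correctly to a $J_m$ product and carrying the $q^{-2}$ prefactor and the closing $q^2\mapsto q$ rescaling through without an exponent or sign slip.
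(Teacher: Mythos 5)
Your proposal is correct and is essentially identical to the paper's own proof: the paper obtains Corollary \ref{coro1} by exactly the same three specializations $(q,x)\rightarrow(q,1)$, $(q,\omega)$, $\bigl(q^3,q\bigr)$ of Theorem \ref{bg3-1}, and your dictionary (the identifications $g_{3,c}\bigl(\pm q,q^2\bigr)=\omega_c(\pm q)$, $g_{3,c}\bigl(\pm\omega q,q^2\bigr)=\rho_c(\pm q)$, $g_{3,c}\bigl(q^4,q^6\bigr)=q^{-2}\beta_c\bigl(q^2\bigr)$, $g_{3,c}\bigl(-q^4,q^6\bigr)=\sigma_c\bigl(q^2\bigr)$ via an index shift, together with all the stated theta evaluations and the final $q^2\mapsto q$ rescaling) checks out in every detail.
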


We also establish the relationships for bilateral series associated
with two-parameter mock theta functions involving $g_3(x,q)$ and the following (see \cite{K,McI1,Mo1}):
\begin{gather*}
\begin{split}
& R(x,q):=\sum_{n=0}^{\infty}\frac{q^{n^2}}{(xq,q/x;q)_n},\qquad
K(x,q):=\sum_{n=0}^{\infty}\frac{(-1)^n\bigl(q;q^2\bigr)_nq^{n^2}}
{\bigl(xq^2,q^2/x;q^2\bigr)_n},\\
& K_1(x,q):=\sum_{n=0}^{\infty}\frac{(-1)^n\bigl(q;q^2\bigr)_nq^{(n+1)^2}} {\bigl(xq,q/x;q^2\bigr)_{n+1}},\qquad
S_2(x,q):=(1+1/x)\sum_{n=0}^{\infty}\frac{(-q;q)_{2n}q^{n+1}}
{\bigl(xq,q/x;q^2\bigr)_{n+1}}.
\end{split}
\end{gather*}
The two-parameter mock theta functions also possess combinatorial interpretations. For instance, $R(x,q)$ is a two-variable generating function for the Dyson rank function
\begin{align*}
R(x,q)=\sum_{n=0}^{\infty}\sum_{m=-\infty}^{\infty}N(m,n)z^mq^n,
\end{align*}
where $N(m,n)$ denotes the number of partitions of $n$ with rank $m$ \cite{Dyson}.
Many striking identities in terms of classical mock theta functions and bilateral series can be derived subsequently. In particular, the mock theta functions with different orders are connected. Following are the related mock theta functions including those mentioned above.

Ramanujan's four third-order mock theta functions (cf.~Ramanujan \cite{R}) are the following:
\begin{align*}
f(q):=\sum_{n=0}^{\infty}\frac{q^{n^2}}{(-q;q)_n^2},\qquad
\phi(q):=\sum_{n=0}^{\infty}\frac{q^{n^2}}{\bigl(-q^2;q^2\bigr)_n},\\
\psi(q):=\sum_{n=1}^{\infty}\frac{q^{n^2}}{\bigl(q;q^2\bigr)_n},\qquad
\chi(q):=\sum_{n=0}^{\infty}\frac{(-q;q)_nq^{n^2}}{\bigl(-q^3;q^3\bigr)_n},
\end{align*}
and (cf.\ Gordon and McIntosh \cite{GM1})
\begin{gather*}
\omega(q):=\sum_{n=0}^{\infty}\frac{q^{2n^2+2n}}{\bigl(q;q^2\bigr)_{n+1}^2},\qquad
\nu(q):=\sum_{n=0}^{\infty}\frac{q^{n^2+n}}{\bigl(-q;q^2\bigr)_{n+1}},\qquad\rho(q):=\sum_{n=0}^{\infty}\frac{\bigl(q;q^2\bigr)_{n+1}q^{2n^2+2n}}{\bigl(q^3;q^6\bigr)_{n+1}},\\
\xi(q):=1+2\sum_{n=1}^{\infty}\frac{q^{6n^2-6n+1}}{\bigl(q,q^5;q^6\bigr)_n},\qquad
\sigma(q):=\sum_{n=1}^{\infty}\frac{q^{3n^2-3n}}{\bigl(-q,-q^2;q^3\bigr)_n}.
\end{gather*}
Note that $\omega(q)$, $\nu(q)$ and $\rho(q)$ appear in \cite{R}, were rediscovered by Watson in \cite{W1}.

The sixth-order mock theta functions (cf.\ Andrews and Hickerson \cite{AH}, Berndt and Chan \cite{BC}) are the following:
\begin{gather*}
\gamma(q):=\sum_{n=0}^{\infty}\frac{(q;q)_nq^{n^2}}{\bigl(q^3;q^3\bigr)_n},\qquad
\phi_-(q):=\sum_{n=1}^{\infty}\frac{(-q;q)_{2n-1}q^n}{\bigl(q;q^2\bigr)_n},\\
\beta(q):=\sum_{n=0}^{\infty}\frac{q^{3n^2+3n+1}}{\bigl(q,q^2;q^3\bigr)_{n+1}},\qquad
\varPhi(q):=\sum_{n=0}^{\infty}\frac{(-q;q)_{2n}q^{n+1}}{\bigl(q;q^2\bigr)_{n+1}^2}.
\end{gather*}
The eighth-order mock theta functions (cf.\ Gordon and McIntosh \cite{GM2})  are the following:
\begin{gather*}
U_0(q):=\sum_{n=0}^{\infty}\frac{\bigl(-q;q^2\bigr)_nq^{n^2}}{\bigl(-q^4;q^4\bigr)_n},\qquad
U_1(q):=\sum_{n=0}^{\infty}\frac{\bigl(-q;q^2\bigr)_nq^{(n+1)^2}}
{\bigl(-q^2;q^4\bigr)_{n+1}},\\
V_1(q):=\sum_{n=0}^{\infty}\frac{\bigl(-q^4;q^4\bigr)_n q^{2n^2+2n+1}}{\bigl(q;q^2\bigr)_{2n+2}}.
\end{gather*}
The second-order mock theta functions (cf.\ McIntosh \cite{McI}) are the following:
\begin{gather*}
A(q):=\sum_{n=0}^{\infty}\frac{\bigl(-q;q^2\bigr)_n q^{(n+1)^2}}{\bigl(q;q^2\bigr)_{n+1}^2},\qquad
B(q):=\sum_{n=0}^{\infty}\frac{\bigl(-q^2;q^2\bigr)_n q^{n^2+n}}{\bigl(q;q^2\bigr)_{n+1}^2},\\
\mu(q):=\sum_{n=0}^{\infty}\frac{(-1)^n\bigl(q;q^2\bigr)_nq^{n^2}}{\bigl(-q^2;q^2\bigr)_n^2}.
\end{gather*}
We also recall the mock theta function $R_2(q)$ due to Gu and Hao \cite{GH}
\begin{align*}
R_2(q):=\sum_{n=0}^{\infty}\frac{(-1)^n\bigl(q;q^2\bigr)_n(1+q)q^{n^2+2n}}
{\bigl(-q^2;q^2\bigr)_n\bigl(-q^2;q^2\bigr)_{n+1}},
\end{align*}
which holds the relation with the second-order mock theta function $\mu(q)$ \cite[equation~(1.7)]{GH},
\begin{align*}
\mu(q)+R_2(q)=2.
\end{align*}

We then establish the following identity for the complete series in terms of $R(x,q)$ and $g_3(x,q)$.
\begin{Theorem}\label{3+2}
We have
\begin{align}\label{1+2}
&j\bigl(xq;q^2\bigr)R_c\bigl(x,q^2\bigr)-\bigl(1-x^{-1}\bigr)qj\bigl(x;q^2\bigr)g_{3,c}\bigl(xq,q^2\bigr)
=\frac{(1-x)J_1^5}{J_2^2j(x;q)}.
\end{align}
\end{Theorem}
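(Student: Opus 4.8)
The plan is to express each bilateral series as a combination of Appell--Lerch sums $m(x,q,z)$, to engineer the linear combination on the left of \eqref{1+2} so that the genuinely mock parts cancel, and then to read off the surviving theta quotient. The first step is to put $R_c$ and $g_{3,c}$ into a common shape. Using the extraction identity $1/(a;q)_n=(aq^n;q)_\infty/(a;q)_\infty$, which is valid for all $n\in\mathbb{Z}$ by the ratio definition of the shifted factorial, together with the reflection $(a;q)_{-n}=q^{\binom{n+1}{2}}/[(-a)^n(q/a;q)_n]$, one checks after the index shift $n\mapsto n+1$ that
\[
g_{3,c}(x,q)=\sum_{n=-\infty}^{\infty}\frac{q^{n^2-n}}{(x;q)_n\,(q/x;q)_n},
\]
and that both bilateral series can be written in the uniform form $\frac{1}{(x;q)_\infty(q/x;q)_\infty}\sum_{n}q^{c_n}(xq^{a_n};q)_\infty(q^{b_n}/x;q)_\infty$. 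This rewriting isolates a shifted pair of infinite products against a Gaussian weight, which is precisely what makes the later identification with $m(x,q,z)$ possible.

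Next I would specialize to the arguments in \eqref{1+2}: with base $q^2$, the series $R_c(x,q^2)$ carries even shifts $xq^{2n+2},\,q^{2n+2}/x$, while $g_{3,c}(xq,q^2)$ carries odd shifts $xq^{2m+1},\,q^{2m+1}/x$. For each side I would collapse the shifted pair of products into a single theta divided by a finite theta product, using the quasi-periodicity $j\bigl(q^{2m}w;q^2\bigr)=(-1)^m q^{-m(m-1)}w^{-m}j\bigl(w;q^2\bigr)$; the finite products $\prod_{|k|\le n}\bigl(1-q^{2k}/x\bigr)$ that then appear in the denominators are exactly of the form $1-q^r xz$ occurring in the definition of $m(x,q,z)$. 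Expanding the remaining infinite product by Euler's identity and summing the resulting Gaussian in the outer index, via $\sum_{n}q^{n^2+sn}=j\bigl(-q^{s+1};q^2\bigr)$, turns each side into a Hecke-type double sum that reduces to $m\bigl(X,q^2,z\bigr)$ with a common first argument $X$ but different evaluation points $z$ on the two sides.

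The crux is the cancellation. The theta prefactors $j\bigl(xq;q^2\bigr)$ and $\bigl(1-x^{-1}\bigr)q\,j\bigl(x;q^2\bigr)$ in \eqref{1+2} are designed so that, after clearing, the two Appell--Lerch sums appear with equal coefficient; their difference then collapses by the change-of-$z$ relation
\[
m(x,q,z_1)-m(x,q,z_0)=\frac{z_0 J_1^3\,j(z_1/z_0;q)\,j(xz_0z_1;q)}{j(z_0;q)\,j(z_1;q)\,j(xz_0;q)\,j(xz_1;q)},
\]
supplemented where needed by the standard $x\mapsto x^{-1}$ symmetry of $m$. What survives is a pure quotient of theta functions, and the final task is to reduce it to $\tfrac{(1-x)J_1^5}{J_2^2\,j(x;q)}$; it is reassuring that the factor $(1-x)$ already emerges naturally from the ratio $(xq;q)_\infty=(x;q)_\infty/(1-x)$ used in the extraction step.

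I expect the main obstacle to be this final theta bookkeeping rather than the structural skeleton: one must pin down the two evaluation points $z_0,z_1$ precisely enough that the mock parts match identically, and then reduce a product of several theta functions in base $q^2$ to the single quotient $\tfrac{(1-x)J_1^5}{J_2^2\,j(x;q)}$ using Jacobi's triple product together with the standard two-dissection and $J_m$-identities. A secondary point needing care is the justification of the term-by-term extraction and the interchange of the double summation in the bilateral regime, where convergence is governed by the condition $|b_1\cdots b_s/a_1\cdots a_r|<|z|<1$ recorded after \eqref{psi}; I would confirm absolute convergence of each rearranged series before equating them.
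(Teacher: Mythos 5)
Your skeleton is the same as the paper's proof of Theorem \ref{3+2}: write both bilateral series as Appell--Lerch sums with a common first argument, choose the theta prefactors so that the two $m$-terms cancel via the change-of-$z$ relation (your displayed formula is exactly the paper's \eqref{2-6}, cited from Hickerson--Mortenson), and then simplify the surviving theta quotient. So the strategy is sound, but as written there is a genuine gap at its central input.

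The gap: you never actually establish the Appell--Lerch representations of the two bilateral series, and the mechanism you sketch for producing them would not work. The paper takes them as known results of Mortenson, stated as \eqref{2-5} and \eqref{2.2-add}, which in the present specialization read
\begin{gather*}
R_c\bigl(x,q^2\bigr)=(1-x)\frac{j\bigl(x;q^2\bigr)}{J_2}m\bigl(x^2,q^2,1/x\bigr),
\qquad
qg_{3,c}\bigl(xq,q^2\bigr)=-\frac{xj\bigl(xq;q^2\bigr)}{J_2}m\bigl(x^2,q^2,q/x\bigr).
\end{gather*}
In your derivation, after extracting $1/(a;q)_n=(aq^n;q)_\infty/(a;q)_\infty$ and collapsing via quasi-periodicity, each term of the bilateral series carries in its denominator a finite \emph{product} of linear factors $\bigl(1-q^k/x\bigr)$ whose length grows with $|n|$, not a single factor $1-q^rxz$; so the claimed term-by-term identification with the definition of $m(x,q,z)$ fails, and ``reduces to $m\bigl(X,q^2,z\bigr)$'' is precisely the nontrivial step (it is the content of Mortenson's ${}_1\psi_1$/partial-fraction arguments, respectively of the Hickerson--Mortenson Hecke-double-sum theorems), not routine bookkeeping. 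Relatedly, you leave unspecified the data on which the whole cancellation hinges: the common first argument must be $x^2$ and the evaluation points $z_0=1/x$, $z_1=q/x$; moreover the cancellation uses not only \eqref{2-6} but also the elementary identity $x(1/x,x;q)_\infty=-(1-x)(x,q/x;q)_\infty$, which is what makes the coefficients of the two $m$-terms exact negatives of each other. You candidly flag this identification and the final theta reduction as the ``main obstacle,'' and that assessment is accurate: as it stands the proposal is a correct plan whose key lemma is missing. If you replace your sketch by citations to the two representations above, the remainder of your argument goes through and coincides with the paper's proof.
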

Obviously, specializing \eqref{1+2} with $x=1$ and then replacing $q^2$ by $q$, it is straightforward to obtain the known result
$
R(1,q)=\frac{1}{J_1}$.

Besides, we set $x=-1$, $i$ in \eqref{1+2} to get the following results associated with the third-order mock theta functions $\omega(q)$, $f(q)$, $\phi(q)$ and $\nu(q)$.

 \begin{Corollary}%\label{3+1}
We have
\begin{gather}
f_c\bigl(q^2\bigr)-4q\frac{J_1^2J_4^4}{J_2^6}\omega_c(-q)
=\frac{J_1^8J_4^2}{J_2^9},\label{3-1}\\
\phi_c\bigl(q^2\bigr)-2q\frac{J_2J_8^2}{J_4^3}\nu_c\bigl(q^2\bigr)
=\frac{J_1^4J_8}{J_2J_4^3}.\label{3-2}
\end{gather}
\end{Corollary}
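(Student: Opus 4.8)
The plan is to obtain both identities purely by specializing the two-parameter identity of Theorem~\ref{3+2}, equation~\eqref{1+2}, at $x=-1$ (which will give~\eqref{3-1}) and at $x=i$ (which will give~\eqref{3-2}). Two things have to be done for each value of $x$: recognise the specialized bilateral two-parameter series $R_c(x,q^2)$ and $g_{3,c}(xq,q^2)$ as classical bilateral mock theta functions, and reduce the three theta factors $j(xq;q^2)$, $j(x;q^2)$, $j(x;q)$ to the $J_m$ notation so that the resulting identity becomes manifest.

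First I would settle the term-by-term identifications; since $N_c$ is assembled from the very same general term as $N$, it suffices to match general terms. For $x=-1$ and base $q^2$ the factor $(xq,q/x;q)_n$ collapses to $(-q^2;q^2)_n^2$, which is precisely the denominator of $f(q^2)$, so $R_c(-1,q^2)=f_c(q^2)$; likewise $(x,q/x;q)_{n+1}$ in $g_3$ becomes $(-q;q^2)_{n+1}^2$, matching $\omega(-q)$, so $g_{3,c}(-q,q^2)=\omega_c(-q)$. For $x=i$ the decisive step is the pairing $(1-i\alpha)(1+i\alpha)=1+\alpha^2$, which turns $(iq^2,-iq^2;q^2)_n$ into $(-q^4;q^4)_n$ and $(iq,-iq;q^2)_{n+1}$ into $(-q^2;q^4)_{n+1}$; comparing with the definitions gives $R_c(i,q^2)=\phi_c(q^2)$ and $g_{3,c}(iq,q^2)=\nu_c(q^2)$.

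Next I would evaluate the theta factors by Jacobi's triple product together with the standard conversions $(-q;q)_\infty=J_2/J_1$, $(-q^2;q^2)_\infty=J_4/J_2$, $(-q;q^2)_\infty=J_2^2/(J_1J_4)$ and $(-q^2;q^4)_\infty=J_4^2/(J_2J_8)$. For $x=-1$ this yields $j(-q;q^2)=J_2^5/(J_1^2J_4^2)$, $j(-1;q^2)=2J_4^2/J_2$ and $j(-1;q)=2J_2^2/J_1$; since $1-x^{-1}=2$ and $1-x=2$ here, dividing~\eqref{1+2} through by $j(-q;q^2)$ produces exactly the coefficient $4qJ_1^2J_4^4/J_2^6$ and the right-hand side $J_1^8J_4^2/J_2^9$, i.e.~\eqref{3-1}. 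For $x=i$ one finds $j(iq;q^2)=J_4^2/J_8$, $j(i;q^2)=(1-i)J_2J_8/J_4$ and $j(i;q)=(1-i)J_1J_4/J_2$; the product $(1-i^{-1})\,j(i;q^2)=(1+i)(1-i)J_2J_8/J_4=2J_2J_8/J_4$ is real, and dividing by $j(iq;q^2)$ leaves the coefficient $2qJ_2J_8^2/J_4^3$ and the right-hand side $J_1^4J_8/(J_2J_4^3)$, i.e.~\eqref{3-2}.

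The work here is not conceptual but bookkeeping, and the main point of care is the $x=i$ case, where the complex factors $1\pm i$ enter both the middle coefficient and the right-hand side and must recombine through $(1+i)(1-i)=2$ to leave a manifestly real identity in the $J_m$. I would also record that both specializations are legitimate: at $x=-1$ and $x=i$ no factor in~\eqref{1+2} acquires a pole, since $j(-1;q)=2J_2^2/J_1\neq0$ and $j(i;q)=(1-i)J_1J_4/J_2\neq0$, so Theorem~\ref{3+2} may simply be evaluated at these points.
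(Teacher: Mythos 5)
Your proposal is correct and is precisely the paper's own route: the paper obtains \eqref{3-1} and \eqref{3-2} by setting $x=-1$ and $x=i$ in \eqref{1+2}, with the same identifications $R_c(-1,q^2)=f_c(q^2)$, $g_{3,c}(-q,q^2)=\omega_c(-q)$, $R_c(i,q^2)=\phi_c(q^2)$, $g_{3,c}(iq,q^2)=\nu_c(q^2)$ and the same theta-product reductions. Your bookkeeping of the factors $1\pm i$ and the $J_m$ evaluations checks out, so nothing further is needed.
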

Note that identity \eqref{3-1} is equivalent to \cite[equation (9)]{ZS} and identity \eqref{3-2} becomes \cite[Corollary 2.2]{HZ} by using the facts \cite{BKLMR,Mc}
\begin{gather*}
\phi_c(q)=\phi(q)+2\psi(q)=2\psi_c(q)=\frac{J_2^7}{J_1^3J_4^3},\qquad
\nu_c(q)=\nu(q)+\nu(-q)=\frac{2J_4^3}{J_2^2}.
\end{gather*}

As defined earlier, $\omega$ is a primitive cube root of unity. It is evident that
\begin{gather*}
(-\omega q,-q/\omega;q)_n=\frac{\bigl(-q^3;q^3\bigr)_n}{(-q;q)_n},\qquad
\bigl(-\omega q,-q/\omega;q^2\bigr)_n=\frac{\bigl(-q^3;q^6\bigr)_n}{\bigl(-q;q^2\bigr)_n}.
\end{gather*}
These identities still hold when $n\rightarrow \infty$. Putting $x=-\omega$ in \eqref{1+2} and utilizing the above facts, we arrive at the following corollary involving the third-order mock theta functions $\chi(q)$, $\rho(q)$.
\begin{Corollary}
We have
\begin{align}
\label{0+3}
\chi_c\bigl(q^2\bigr)-q\frac{J_2^3J_3J_{12}^2}{J_1J_4^2J_6^3}\rho_c(-q)
=&\frac{J_1^2J_3^2J_{12}}{J_4J_6^3}.
\end{align}
\end{Corollary}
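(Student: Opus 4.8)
The plan is to specialize Theorem~\ref{3+2} at $x=-\omega$ and then to identify the two bilateral series and reduce the resulting theta constants to $J_m$-products. Substituting $x=-\omega$ into \eqref{1+2} gives, since $1-x=1+\omega$ and $1-x^{-1}=1+\omega^{-1}$,
\begin{align*}
j\bigl(-\omega q;q^2\bigr)R_c\bigl(-\omega,q^2\bigr)-\bigl(1+\omega^{-1}\bigr)q\,j\bigl(-\omega;q^2\bigr)g_{3,c}\bigl(-\omega q,q^2\bigr)=\frac{(1+\omega)J_1^5}{J_2^2\,j(-\omega;q)}.
\end{align*}

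First I would identify the two series. In $R_c(-\omega,q^2)$ the denominator factor is $(-\omega q^2,-q^2/\omega;q^2)_n$; applying the factorization $(-\omega q,-q/\omega;q)_n=(-q^3;q^3)_n/(-q;q)_n$ with $q\mapsto q^2$ rewrites the summand as $(-q^2;q^2)_nq^{2n^2}/(-q^6;q^6)_n$, which is precisely the summand of $\chi(q^2)$, so $R_c(-\omega,q^2)=\chi_c(q^2)$. In $g_{3,c}(-\omega q,q^2)$ the denominator factor is $(-\omega q,-q/\omega;q^2)_{n+1}$; the stated identity $(-\omega q,-q/\omega;q^2)_n=(-q^3;q^6)_n/(-q;q^2)_n$ turns the summand into $(-q;q^2)_{n+1}q^{2n^2+2n}/(-q^3;q^6)_{n+1}$, and because $2n^2+2n$ is even this equals the summand of $\rho(-q)$, so $g_{3,c}(-\omega q,q^2)=\rho_c(-q)$. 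Both identifications are termwise in $n\in\mathbb Z$ and hence pass to the full bilateral series.

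Next I would evaluate the three theta constants via Jacobi's triple product, the factorization $(1+x)(1+\omega x)(1+x/\omega)=1+x^3$, and the usual Euler products $(-q;q)_\infty=J_2/J_1$ and $(-q;q^2)_\infty=J_2^2/(J_1J_4)$. This yields
\begin{align*}
j\bigl(-\omega q;q^2\bigr)=\frac{J_1J_4J_6^2}{J_2J_3J_{12}},\qquad
j\bigl(-\omega;q^2\bigr)=(1+\omega)\frac{J_2^2J_{12}}{J_4J_6},\qquad
j(-\omega;q)=(1+\omega)\frac{J_1^2J_6}{J_2J_3}.
\end{align*}
The key cancellation is that $1+\omega+\omega^2=0$ forces $(1+\omega^{-1})(1+\omega)=(1+\omega^2)(1+\omega)=1$, so the coefficient of $\rho_c(-q)$ becomes $-q\,J_2^2J_{12}/(J_4J_6)$ with no $\omega$ left, while the $(1+\omega)$ in the numerator of the right-hand side cancels the one inside $j(-\omega;q)$, leaving $J_1^3J_3/(J_2J_6)$. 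At this point the specialized identity reads
\begin{align*}
\frac{J_1J_4J_6^2}{J_2J_3J_{12}}\chi_c\bigl(q^2\bigr)-q\frac{J_2^2J_{12}}{J_4J_6}\rho_c(-q)=\frac{J_1^3J_3}{J_2J_6}.
\end{align*}
Dividing through by $j(-\omega q;q^2)=J_1J_4J_6^2/(J_2J_3J_{12})$ and reducing the two resulting $J_m$-quotients produces exactly \eqref{0+3}.

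Each individual step is a routine manipulation; the step I expect to require the most care is the reduction of the theta constants, where one must both apply the cube-root factorization at the correct base ($q$ for $R_c$ and $j(-\omega;q)$, $q^2$ for $g_{3,c}$ and $j(-\omega q;q^2)$, $j(-\omega;q^2)$) and verify that every $\omega$-dependent factor introduced by the specialization collapses through $(1+\omega)(1+\omega^2)=1$, so that the final coefficients are genuine $q$-product quotients.
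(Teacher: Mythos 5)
Your proposal is correct and follows essentially the same route as the paper: specializing Theorem~\ref{3+2} at $x=-\omega$, identifying $R_c\bigl(-\omega,q^2\bigr)=\chi_c\bigl(q^2\bigr)$ and $g_{3,c}\bigl(-\omega q,q^2\bigr)=\rho_c(-q)$ via the cube-root factorizations, and reducing the theta constants (your evaluations of $j\bigl(-\omega q;q^2\bigr)$, $j\bigl(-\omega;q^2\bigr)$, $j(-\omega;q)$ and the cancellation $(1+\omega)\bigl(1+\omega^2\bigr)=1$ all check out). The paper states this only in one line, so your write-up simply supplies the details it leaves implicit.
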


Next, we obtain the relation for bilateral series associated with universal mock theta function~$g_3(x,q)$.
\begin{Theorem}\label{5+1}
We have
\begin{align}\label{5+2}
xj\bigl(xq;q^2\bigr)g_{3,c}\bigl(x,q^2\bigr)+x^{-1}qj\bigl(x;q^2\bigr)g_{3,c}\bigl(xq,q^2\bigr)
=\frac{J_1^5}{J_2^2j(x;q)}-\frac{J_2j(x;q)}{J_1}.
\end{align}
\end{Theorem}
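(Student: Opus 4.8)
The plan is to follow the Appell--Lerch strategy announced for Section~\ref{sec1}, using the fact that although $g_{3,c}(x,q^2)$ and $g_{3,c}(xq,q^2)$ are each genuinely mock, the theta coefficients in \eqref{5+2} are tuned so that their non-modular parts annihilate one another. I would first bring the bilateral universal mock theta function into Appell--Lerch normal form. Splitting the defining series of $g_{3,c}(x,q)$ at $n=0$ and applying the reflection formula $(a;q)_{-m}=(-q/a)^m q^{m\choose2}/(q/a;q)_m$ to the terms with $n=-m-1$, the negative tail collapses cleanly, giving
\begin{align*}
g_{3,c}(x,q)=g_3(x,q)+\sum_{m=0}^{\infty}(x;q)_m(q/x;q)_m q^m .
\end{align*}
Since the genuinely quadratic factor $q^{n^2+n}$ cannot be reconciled with any $_r\psi_s$ balancing factor in \eqref{psi}, this confirms that $g_{3,c}$ is mock, and I would then invoke the standard representation of the universal mock theta function as a single Appell--Lerch sum $m(\,\cdot\,,q,\,\cdot\,)$ to write each of $g_{3,c}(x,q^2)$ and $g_{3,c}(xq,q^2)$ through $m$.

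With both terms expressed through $m$, the core of the argument is to merge them. The shift $x\mapsto xq$ linking the two $g_{3,c}$ factors becomes, on the Appell--Lerch side, a shift of the $x$- and $z$-arguments, which I would reconcile using the standard functional equations for $m(x,q,z)$: the change-of-$z$ relation, which expresses the difference $m(x,q,z_1)-m(x,q,z_0)$ as an explicit quotient of theta functions; the inversion symmetry $m(x,q,z)=x^{-1}m(x^{-1},q,z^{-1})$ (a consequence of $j(z^{-1};q)=-z^{-1}j(z;q)$); and the quasi-periodicity $m(qx,q,z)=1-x\,m(x,q,z)$ together with its analogue under $z\mapsto qz$. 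The weights $x\,j(xq;q^2)$ and $x^{-1}q\,j(x;q^2)$ are exactly those needed to make the two Appell--Lerch pieces coincide and cancel, leaving only a combination of Jacobi theta quotients. As a consistency check one verifies that both sides of \eqref{5+2} are invariant under $x\mapsto q/x$: indeed $g_{3,c}(y,q^2)=g_{3,c}(q^2/y,q^2)$ and $j(x;q)=j(q/x;q)$ interchange the two summands on the left.

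It remains to reduce the surviving theta quotient to $\frac{J_1^5}{J_2^2 j(x;q)}-\frac{J_2\,j(x;q)}{J_1}$. The key simplification is the product identity
\begin{align*}
j(x;q^2)\,j(xq;q^2)=\frac{J_2^2}{J_1}\,j(x;q),
\end{align*}
which follows at once from Jacobi's triple product upon merging the base-$q^2$ shifted factorials into base-$q$ ones. Writing $P:=j(x;q^2)\,j(xq;q^2)$, the right-hand side of \eqref{5+2} is exactly $J_1^4/P-P/J_2$, a difference of a term proportional to $1/P$ and a term proportional to $P$; I read the first as coming from the theta quotient produced by the change-of-$z$ formula and the second from the constant terms $1-x\,m(\,\cdot\,)$ carried by the quasi-periodicity relations. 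I expect this last reduction to be the main obstacle: the cancellation of the mock parts is essentially forced by the choice of coefficients, but assembling the leftover products into precisely this \emph{difference} of two quotients requires careful use of the triple product and of auxiliary identities such as $J_1=J_2\,(q;q^2)_\infty$. Finally, the statement can be cross-checked against the independent derivation in Appendix~\ref{appendixB}, which recovers \eqref{5+2} from a transformation lemma of Andrews~\cite{A} applied directly to the bilateral series.
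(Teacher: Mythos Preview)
Your plan is essentially the paper's own Section~\ref{sec1} proof: express the bilateral $g_{3,c}$ through a single Appell--Lerch sum, apply the quasi-periodicity $m(qx,q,z)=1-x\,m(x,q,z)$ together with the change-of-$z$ relation \eqref{2-6}, and simplify via $j(x;q^2)j(xq;q^2)=\frac{J_2^2}{J_1}j(x;q)$. Two small cleanups: the initial splitting $g_{3,c}=g_3+\sum_m(x;q)_m(q/x;q)_mq^m$ and the inversion symmetry are unnecessary, since Mortenson's formula \eqref{2-5} already writes $g_{3,c}$ directly as one $m$-function, and your cross-reference to the second proof should point to Appendix~\ref{appendixA}, not~\ref{appendixB}.
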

Identity \eqref{5+2} also implies some interesting identities analogous to \eqref{3-1}--\eqref{0+3}. Here, comparing with the series forms of the third-order mock theta functions $f(q)$, $\xi(q)$, we first define
\begin{gather*}
M(q):=\sum_{n=0}^{\infty}\frac{q^{n^2+n}}{(-1,-q;q)_{n+1}}=g_3(-1,q),\qquad
N(q):=\sum_{n=0}^{\infty}\frac{q^{6n^2+6n+1}}{\bigl(q,q^5;q^6\bigr)_{n+1}}=qg_3\bigl(q,q^6\bigr).
\end{gather*}
Note that one can find another representation for $M(q)$ in \cite{SW}. Putting $(q,x)\rightarrow(q,-1),\allowbreak \bigl(q^3,-q^2\bigr), \bigl(q^3,q^2\bigr)$ in \eqref{5+2}, the following identities \eqref{5-1}--\eqref{5-3} related to the bilateral forms of $M(q)$, $N(q)$, third-order mock theta function $\omega(q)$, $\sigma(q)$, sixth-order $\beta(q)$ are concluded.
\begin{Corollary}
We have
\begin{gather}
M_c\bigl(q^2\bigr)+2q\frac{J_1^2J_4^4}{J_2^6}\omega_c(-q)
=2\frac{J_4^2}{J_2^2}-\frac{J_1^8J_4^2}{2J_2^9},\label{5-1}\\
q^2\sigma_c\bigl(q^2\bigr)-\frac{J_1J_4^2J_6^3}{J_2^3J_3J_{12}^2}N_c(-q)
=\frac{J_4J_6}{J_2J_{12}}-\frac{J_1^2J_3^2J_4}{J_2^3J_{12}},\label{5-2}\\
\beta_c\bigl(q^2\bigr)+\frac{J_2^2J_3}{J_1J_6^2}N_c(q)
=\frac{J_2J_3^6}{J_1^2J_6^4}-\frac{J_2}{J_6}.\label{5-3}
\end{gather}
\end{Corollary}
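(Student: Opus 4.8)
The plan is to derive \eqref{5-1}--\eqref{5-3} by specializing the master identity \eqref{5+2} of Theorem~\ref{5+1} at the three parameter pairs $(q,x)=(q,-1)$, $(q^3,-q^2)$ and $(q^3,q^2)$, respectively. In each case two tasks must be carried out. First, every occurrence of $g_{3,c}$ on the left-hand side has to be recognized as one of the bilateral mock theta functions appearing in the corollary. Second, each Jacobi theta factor $j(\,\cdot\,;\,\cdot\,)$ produced by the specialization must be reduced to an eta-quotient in the $J_m$. After these reductions the specialized identity becomes a linear relation between two bilateral mock theta functions with eta-quotient coefficients, and dividing through by the coefficient of the first function yields exactly the stated form.

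For the first task I would record the following dictionary, each entry obtained by matching defining series. From \eqref{g3} with base $q^2$ one has $\omega(q)=g_3(q,q^2)$, hence $\omega_c(-q)=g_{3,c}(-q,q^2)$; and by definition $M_c(q^2)=g_{3,c}(-1,q^2)$. Writing out $g_3(q,q^3)$ and comparing with $\beta$ gives $\beta(q)=q\,g_3(q,q^3)$, so that $\beta_c(q^2)=q^2g_{3,c}(q^2,q^6)$, while $N(q)=q\,g_3(q,q^6)$ gives $N_c(\pm q)=\pm q\,g_{3,c}(\pm q,q^6)$. A short shift of the summation index $n\mapsto n-1$ in the series for $\sigma(q^2)$ identifies it with $g_3(-q^2,q^6)$, whence $\sigma_c(q^2)=g_{3,c}(-q^2,q^6)$. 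Finally, since the summand of $g_{3,c}(x,q)$ is invariant under $x\mapsto q/x$, we have $g_{3,c}(x,q)=g_{3,c}(q/x,q)$; this lets me replace the arguments $q^5$ and $-q^5$ that arise in the $(q^3,q^2)$ and $(q^3,-q^2)$ specializations by $q$ and $-q$ modulo the base $q^6$, so that the second left-hand term of \eqref{5+2} also becomes an $N_c$.

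For the theta reductions I would use that replacing $q$ by $q^3$ in \eqref{5+2} sends $J_1\mapsto J_3$ and $J_2\mapsto J_6$, and then evaluate each factor by the elementary product manipulations $(a;q)_\infty(-a;q)_\infty=(a^2;q^2)_\infty$ together with splitting infinite products into residue classes; for instance $j(q^2;q^3)=J_1$, $j(q^2;q^6)=J_2$, $j(-1;q)=2J_2^2/J_1$, $j(-q;q^2)=J_2^5/(J_1^2J_4^2)$, and $j(-q^2;q^3)=J_2J_3^2/(J_1J_6)$. Substituting the dictionary and these evaluations into the three specializations and clearing the leading coefficient produces \eqref{5-1}, \eqref{5-2} and \eqref{5-3}. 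I expect the main obstacle to be purely organizational: keeping the numerous eta-quotient reductions consistent and, in particular, carrying out the index shift for $\sigma$ and the $x\mapsto q/x$ rewriting for $N_c$ without sign or exponent errors, since an oversight there would corrupt the coefficients on both sides.
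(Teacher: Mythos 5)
Your proposal is correct and takes essentially the same route as the paper, which likewise obtains \eqref{5-1}--\eqref{5-3} simply by putting $(q,x)\rightarrow(q,-1)$, $\bigl(q^3,-q^2\bigr)$, $\bigl(q^3,q^2\bigr)$ in \eqref{5+2}. Your supporting details — the identifications $M_c\bigl(q^2\bigr)=g_{3,c}\bigl(-1,q^2\bigr)$, $\omega_c(-q)=g_{3,c}\bigl(-q,q^2\bigr)$, $\sigma_c\bigl(q^2\bigr)=g_{3,c}\bigl(-q^2,q^6\bigr)$, $\beta_c\bigl(q^2\bigr)=q^2g_{3,c}\bigl(q^2,q^6\bigr)$, $N_c(\pm q)=\pm q\,g_{3,c}\bigl(\pm q,q^6\bigr)$, the use of the symmetry $g_{3,c}(x,q)=g_{3,c}(q/x,q)$ to handle the arguments $\pm q^5$, and the listed theta evaluations — are all accurate and suffice to carry out the reductions the paper leaves implicit.
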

Moreover, examining \eqref{3-1} and \eqref{5-1} yields the following result.
\begin{Corollary}
We have
$
f_c(q)+2M_c(q)=4\frac{J_2^2}{J_1^2}$.
\end{Corollary}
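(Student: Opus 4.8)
The plan is to eliminate the common bilateral term $\omega_c(-q)$ from \eqref{3-1} and \eqref{5-1} by forming a suitable linear combination, and then to rescale the base. First I would record that the coefficient of $\omega_c(-q)$ in \eqref{3-1} is $-4q J_1^2 J_4^4/J_2^6$, whereas in \eqref{5-1} it is $+2q J_1^2 J_4^4/J_2^6$; these stand in the ratio $-2:1$. Consequently, adding \eqref{3-1} to twice \eqref{5-1} causes the two $\omega_c(-q)$ contributions to cancel identically, which is precisely what makes $\omega_c(-q)$ disappear from the result.

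Carrying out this combination, the left-hand side collapses to $f_c(q^2)+2M_c(q^2)$. On the right-hand side, the term $J_1^8 J_4^2/J_2^9$ coming from \eqref{3-1} is cancelled by the contribution $2\cdot\bigl(-J_1^8 J_4^2/(2J_2^9)\bigr)=-J_1^8 J_4^2/J_2^9$ coming from twice \eqref{5-1}, so that only $2\cdot 2\,J_4^2/J_2^2 = 4 J_4^2/J_2^2$ survives. This produces the intermediate identity $f_c(q^2)+2M_c(q^2)=4 J_4^2/J_2^2$. The final step is to replace $q^2$ by $q$: under this substitution the bilateral series pass to $f_c(q)$ and $M_c(q)$, while the theta quotient transforms through $J_4=(q^4;q^4)_\infty\mapsto(q^2;q^2)_\infty=J_2$ and $J_2=(q^2;q^2)_\infty\mapsto(q;q)_\infty=J_1$, giving $J_4^2/J_2^2\mapsto J_2^2/J_1^2$. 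Hence $f_c(q)+2M_c(q)=4 J_2^2/J_1^2$, as claimed.

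Since the argument is nothing more than a linear elimination followed by a base change, there is no genuine obstacle here. The only point deserving care is to verify that the rescaling $q^2\mapsto q$ shifts the indices of every theta factor consistently, each $J_{2m}$ becoming $J_m$, and that no stray power of $q$ is left over on the right-hand side; one checks that all surviving $q$-dependence is fully absorbed into the $J_m$, so the clean form $4 J_2^2/J_1^2$ is correct.
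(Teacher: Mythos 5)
Your proof is correct and matches the paper's approach exactly: the paper derives this corollary precisely by "examining \eqref{3-1} and \eqref{5-1}," i.e., by the same elimination of $\omega_c(-q)$ (adding \eqref{3-1} to twice \eqref{5-1}) followed by the base change $q^2\mapsto q$. Your arithmetic on both sides and the rescaling of the theta quotients $J_4^2/J_2^2\mapsto J_2^2/J_1^2$ are all accurate.
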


Furthermore, the results concerning the bilateral series associated with other two-parameter mock theta functions are also concluded as follows.
\begin{Theorem}\label{0+5}
We have
\begin{gather}
\frac{x}{x-1}K_c(x,q)+\bigg(1-\frac{1}{xq}\bigg)K_{1,c}(xq,q)
=\frac{j\bigl(xq^2;q^2\bigr)}{J_1},\label{1+3}\\
\frac{J_1j\bigl(-xq;q^2\bigr)}{x-1}K_{c}(x,q)
+\frac{2qJ_4j\bigl(x;q^2\bigr)}{x}g_{2,c}\bigl(-xq,q^2\bigr)\nonumber\\
\qquad=\frac{2qJ_4^5j\bigl(x;q^2\bigr)}{xJ_2^2 j\bigl(x^2q^2;q^4\bigr)}
-\frac{J_4^{11}j\bigl(x;q^2\bigr)j\bigl(-x^2q^2;q^4\bigr)}
{x^2J_2^4J_8^4j\bigl(x^2q^2;q^4\bigr)j\bigl(-x^2q^4;q^4\bigr)}\nonumber\\
\phantom{\qquad=}{}+\frac{J_4^5j\bigl(x;q^2\bigr)j\bigl(-xq;q^2\bigr)}
{x^2J_8^2j\bigl(xq;q^2\bigr)j\bigl(-x^2q^4;q^4\bigr)}
-\frac{J_1^5j\bigl(-xq;q^2\bigr)}{J_2^2j(x;q)},\label{1+4}\\
2xJ_4^2j\bigl(xq;q^2\bigr)S_{2,c}\bigl(x^2,q^2\bigr)-qJ_2^2j\bigl(x^2q^2;q^4\bigr)g_{3,c}\bigl(xq,q^2\bigr)\nonumber\\
\qquad=\frac{xJ_4^{12}j\bigl(xq;q^2\bigr)j\bigl(-x^2q^2;q^4\bigr)}{J_2^4J_8^4j\bigl(x^2q^2;q^4\bigr)j\bigl(-x^2;q^4\bigr)}
-\frac{xJ_2^5j\bigl(x^2q^2;q^4\bigr)j\bigl(-xq;q^2\bigr)^2}{2J_4^2j\bigl(-x^2;q^2\bigr)j\bigl(xq;q^2\bigr)}\nonumber\\
\phantom{\qquad=}{}-\frac{xJ_2^5j\bigl(xq;q^2\bigr)j\bigl(x^2q^2;q^4\bigr)}{2J_4^2j\bigl(-x^2;q^2\bigr)}.\label{1+9}
\end{gather}
\end{Theorem}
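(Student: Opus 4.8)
The plan is to prove all three identities \eqref{1+3}, \eqref{1+4}, \eqref{1+9} uniformly by passing to Appell--Lerch sums, exactly as announced for Section~\ref{sec1}. The first step is to write each bilateral two-parameter mock theta function appearing on the left-hand sides---namely $K_c(x,q)$, $K_{1,c}(xq,q)$, $S_{2,c}(x^2,q^2)$, $g_{2,c}(-xq,q^2)$ and $g_{3,c}(xq,q^2)$---as an Appell--Lerch sum $m(\cdot,\cdot,\cdot)$ together with a theta quotient. The mechanism is that the denominator $(x,q/x;q)_{n+1}$ (or its $q^2$ analogue) splits under partial fractions into simple factors $1/(1-q^r x z)$, so that summing the bilateral Gaussian $q^{n^2+\cdots}$ over $n\in\mathbb{Z}$---using Ramanujan's ${}_1\psi_1$ summation \eqref{0-5} and the triple product $j(z;q)$---reproduces precisely the defining series of $m(x,q,z)$. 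I would record these representations as a preliminary lemma and then treat \eqref{1+3}, \eqref{1+4}, \eqref{1+9} as corollaries of manipulations among the resulting $m$-functions.

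For the cleanest identity \eqref{1+3}, the right-hand side $j(xq^2;q^2)/J_1$ is a single theta quotient, which signals that the Appell--Lerch (genuinely mock) contributions of $K_c(x,q)$ and $K_{1,c}(xq,q)$ must cancel completely. Concretely, I expect the two representations to share the same $m(x,q,z)$ up to the change-of-$z$ relation, so that with the prefactors $x/(x-1)$ and $1-1/(xq)$ the mock parts drop out and only the theta quotients survive; collapsing the surviving product of triple products to $j(xq^2;q^2)/J_1$ is then a direct Jacobi-triple-product computation.

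For \eqref{1+4} and \eqref{1+9} the same substitution is carried out, but now the several Appell--Lerch sums do not cancel outright. Here the engine is the change-of-$z$ formula for Appell--Lerch sums,
\[
m(x,q,z_1)-m(x,q,z_0)=\frac{z_0 J_1^3\, j(z_1/z_0;q)\, j(xz_0z_1;q)}{j(z_0;q)\,j(z_1;q)\,j(xz_0;q)\,j(xz_1;q)},
\]
together with the standard parameter-shift and inversion symmetries of $m(x,q,z)$. Applying these to line up all the $m$-functions so that they cancel in pairs produces, one at a time, the individual theta quotients that appear on the right-hand sides of \eqref{1+4} and \eqref{1+9}; the mixing of the bases $q$, $q^2$ and $q^4$ (visible in factors such as $j(x^2q^2;q^4)$, $j(-x^2;q^4)$ and $J_8$) is reconciled using triple-product dissection identities that split a theta function at base $q$ into products of theta functions at base $q^2$ and $q^4$.

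The main obstacle will not be the Appell--Lerch cancellation, which is structurally routine, but the theta-function bookkeeping on the right-hand sides of \eqref{1+4} and \eqref{1+9}. Verifying that the accumulated products of $j(\cdot;q^2)$, $j(\cdot;q^4)$ and the constants $J_m$ collapse to exactly the stated multi-term expressions requires lengthy and delicate manipulation across the three bases, plus careful tracking of the residue terms generated by each application of the change-of-$z$ formula. I would organize this by reducing every theta factor to a common base (say $q^4$) before comparing, so that the final identity becomes a finite check among products of the $J_m$.
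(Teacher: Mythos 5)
Your proposal takes essentially the same route as the paper: the paper quotes the Appell--Lerch/theta representations of the bilateral series from Mortenson~\cite{Mo} (namely \eqref{2-5}, \eqref{3-3}, \eqref{2-7}, \eqref{2-4}, \eqref{2-8}) and cancels the mock parts pairwise---in \eqref{1+3} and \eqref{1+4} the $g_3\bigl(xq,q^2\bigr)$ contributions cancel outright, while \eqref{1+9} uses the change-of-$z$ formula \eqref{2-6} to shift $m\bigl(x^2,q^2,q/x\bigr)$ to $m\bigl(x^2,q^2,-1\bigr)$---leaving exactly the stated theta quotients, which is precisely your plan. The only difference is that you propose to re-derive those representations via partial fractions and the ${}_1\psi_1$ summation rather than citing them, which is standard but nontrivial extra work, not a different method.
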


Taking $x=-1$ in \eqref{1+3}, \eqref{1+4} and $x=1$ in \eqref{1+9} yields the following identities \eqref{0+1add}--\eqref{1+12}.
\begin{Corollary}
We have
\begin{gather}
\mu_c(q)+R_{2,c}(q)=4\frac{J_4^2}{J_1J_2},\label{0+1add}\\
\mu_c(q)+8q\frac{J_4^3}{J_1^3}B_c(q)=8q\frac{J_4^8}{J_1^3J_2^4}
+2\frac{J_4^{20}}{J_1^3J_2^8J_8^8}+\frac{J_1^5}{J_2^4}
-2\frac{J_1J_4^{10}}{J_2^6J_8^4},\label{0+4}\\
16\varPhi_c\bigl(q^2\bigr)-4q\frac{J_2^5}{J_1^2J_4^3}\omega_c(q)
=2\frac{J_4^{17}}{J_2^8J_8^8}-\frac{J_2^{20}}{J_1^8J_4^{11}}
-\frac{J_2^8}{J_4^7}.\label{1+12}
\end{gather}
\end{Corollary}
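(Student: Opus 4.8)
The plan is to obtain all three identities by specializing the free parameter in Theorem~\ref{0+5}, exactly as the sentence preceding the statement prescribes: $x=-1$ in \eqref{1+3} yields \eqref{0+1add}, $x=-1$ in \eqref{1+4} yields \eqref{0+4}, and $x=1$ in \eqref{1+9} yields \eqref{1+12}. For each identity the work splits into two independent pieces: recognizing that the specialized bilateral two-parameter functions collapse to the named bilateral mock theta functions, and simplifying the resulting products of theta quotients on the right-hand side to the stated eta-quotient form. I expect the first piece to be routine and the second to carry all the real labor.

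First I would record the term-by-term collapses, each of which is an equality of general terms as functions of $n$ and therefore passes verbatim to the bilateral sums. Setting $x=-1$ turns the denominator of $K(x,q)$ into $(-q^2;q^2)_n^2$, so $K_c(-1,q)=\mu_c(q)$; since $q^{(n+1)^2}=q\,q^{n^2+2n}$ and $(-1;q^2)_{n+1}=2(-q^2;q^2)_n$, the general term of $K_1(-q,q)$ is $q/(2(1+q))$ times that of $R_2(q)$, giving $K_{1,c}(-q,q)=\frac{q}{2(1+q)}R_{2,c}(q)$. Similarly one checks directly from the definitions that $g_3(q,q^2)=\omega(q)$ (consistent with \eqref{0-1}), that $g_2(q,q^2)=B(q)$, and that $S_2(1,q^2)=2\varPhi(q^2)$, the factor $2$ arising from $1+1/x$ at $x=1$. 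These collapses account for the integer constants and monomial $q$-powers appearing in \eqref{0+1add}--\eqref{1+12}.

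Next I would assemble the theta evaluations needed to treat the right-hand sides, all derived from $j(z;q)=(z,q/z,q;q)_\infty$ together with $(-1;q)_\infty=2(-q;q)_\infty$ and the standard products $(-q;q)_\infty=J_2/J_1$ and $(-q;q^2)_\infty=J_2^2/(J_1J_4)$: namely $j(-1;q)=2J_2^2/J_1$, $j(q;q^2)=J_1^2/J_2$, $j(-q;q^2)=J_2^5/(J_1^2J_4^2)$, $j(-1;q^2)=2J_4^2/J_2$, $j(q^2;q^4)=J_2^2/J_4$, $j(-q^2;q^4)=J_4^5/(J_2^2J_8^2)$, and $j(-1;q^4)=j(-q^4;q^4)=2J_8^2/J_4$. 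For \eqref{0+1add} this already suffices: the right-hand side of \eqref{1+3} at $x=-1$ is $j(-q^2;q^2)/J_1=2J_4^2/(J_1J_2)$, the left-hand side collapses to $\frac12\mu_c(q)+\frac12 R_{2,c}(q)$, and multiplying by $2$ gives \eqref{0+1add} at once. For \eqref{0+4} and \eqref{1+12} the left-hand sides become $-\frac{J_1^3}{2J_2}\mu_c(q)-\frac{4qJ_4^3}{J_2}B_c(q)$ and $\frac{4J_1^2J_4^2}{J_2}\varPhi_c(q^2)-\frac{qJ_2^4}{J_4}\omega_c(q)$ respectively, so clearing the prefactors $-2J_2/J_1^3$ and $4J_2/(J_1^2J_4^2)$ reproduces the displayed left-hand sides; the matching right-hand sides are then what remain to be verified.

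The hard part will be the bookkeeping for \eqref{0+4} and \eqref{1+12}: the right-hand sides of \eqref{1+4} and \eqref{1+9} are sums of three or four terms, each a quotient of several theta functions at arguments such as $-x^2q^2$, $-x^2q^4$ and $x^2q^2$, and after the substitution $x=\pm1$ and multiplication by the prefactor above these must be brought over a common eta denominator and combined, with careful tracking of the exponents of $J_2,J_4,J_8$ and of the signs. I expect no conceptual difficulty here, only the risk of an arithmetic slip in merging the multi-term expressions; accordingly I would confirm each final identity by expanding both sides as a power series in $q$ to a moderate order.
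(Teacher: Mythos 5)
Your proposal is correct and follows exactly the paper's route: the paper proves this corollary precisely by setting $x=-1$ in \eqref{1+3} and \eqref{1+4} and $x=1$ in \eqref{1+9}, and your collapses ($K_c(-1,q)=\mu_c(q)$, $K_{1,c}(-q,q)=\tfrac{q}{2(1+q)}R_{2,c}(q)$, $g_{2,c}(q,q^2)=B_c(q)$, $g_{3,c}(q,q^2)=\omega_c(q)$, $S_{2,c}(1,q^2)=2\varPhi_c(q^2)$), prefactors, and theta evaluations all check out. The remaining right-hand-side bookkeeping you defer does indeed close without surprises (e.g., multiplying the specialized right side of \eqref{1+4} by $-2J_2/J_1^3$ reproduces the four terms of \eqref{0+4} exactly), so the proof is complete in substance.
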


\begin{Remark}
Together the following identity due to Zhang and Song \cite{ZS}
\begin{align*}%\label{0-6}
\mu_c(q)+8q\frac{J_4^3}{J_1^3}B_c(q)=\frac{J_2^{20}}{J_1^{11}J_4^8},
\end{align*}
with \eqref{0+4} yields the following theta series identity:
\begin{align*}
\frac{J_2^{20}}{J_1^{11}J_4^8}=8q\frac{J_4^8}{J_1^3J_2^4}
+2\frac{J_4^{20}}{J_1^3J_2^8J_8^8}+\frac{J_1^5}{J_2^4}
-2\frac{J_1J_4^{10}}{J_2^6J_8^4}.
\end{align*}
The above result can be proved by using a \textsc{Maple} program \cite{FG}. In a similar manner, identity~\eqref{1+12} becomes
\begin{align*}
16\varPhi_c\bigl(q^2\bigr)-4q\frac{J_2^5}{J_1^2J_4^3}\omega_c(q)=-8q\frac{J_4^5}{J_2^4}.
\end{align*}
\end{Remark}

Finally, we shall establish the following series identities in terms of the third-order mock theta functions $\omega(q)$, $f(q)$, $\gamma(q)$, $\sigma(q)$, $\nu(q)$, $\phi(q)$, $\psi(q)$, the eighth-order $U_0(q)$, $U_1(q)$, $V_1(q)$, the second-order $A(q)$, $B(q)$, $\mu(q)$ and the sixth-order $\gamma(q)$, $\phi_-(q)$, $\beta(q)$, $\varPhi(q)$. Different from the above, they are the relations between the classical mock theta functions and the bilateral mock theta functions.
\begin{Theorem}\label{1+6}
We have
\begin{gather}
2\frac{J_1^2}{J_2^2}\varPhi(q)= q\omega_c(q),\label{1+13}\\
\frac{J_2J_3}{J_1J_6}\gamma(q)-3q\sigma_c(q)= \frac{J_1^4}{J_2^2J_3},\label{1+7}\\
\beta_c(q)-\frac{J_1}{J_3}\phi_-(q)= q\frac{J_1J_6^6}{J_2^2J_3^4},\label{1+8}\\
\frac{J_2^5}{J_1^3J_4^2}f\bigl(q^2\bigr)+4A_c(-q)= \frac{J_1^5}{J_2^4},\label{1+14}\\
\frac{J_4^2}{J_1J_8}\phi\bigl(q^2\bigr)+2U_{1,c}(-q)= \frac{J_1^3}{J_2J_4},\label{1+15}\\
\mu_c(q)-4q\frac{J_4^2}{J_1J_2}\omega(-q)= \frac{J_1^5}{J_2^4},\label{1+16}\\
U_{0,c}(-q)-2q\frac{J_2J_8}{J_1J_4}\nu\bigl(q^2\bigr)= \frac{J_1^3}{J_2J_4},\label{1+17}\\
8q\frac{J_4^3}{J_1^3}B_c(q)+4q\frac{J_4^2}{J_1J_2}\omega(-q)= 2\frac{J_4^{20}}{J_1^3J_2^8J_8^8}
+8q\frac{J_4^8}{J_1^3J_2^4}-2\frac{J_1J_4^{10}}{J_2^6J_8^4},\label{1+18}\\
2V_{1,c}(q)-\frac{J_1J_4}{J_2J_8}\psi(-q)= \frac{J_8^9}{J_2J_4^3J_{16}^4}+2q\frac{J_8^3}{J_2J_4}-\frac{J_1^2J_8^4}{J_2^2J_4J_{16}^2}.\label{1+19}
\end{gather}
\end{Theorem}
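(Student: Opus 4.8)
The plan is to reduce every bilateral mock theta function appearing in Theorem~\ref{1+6} to its classical counterpart plus an explicit ``partner'' series, and only then to invoke the relations already established in Theorems~\ref{bg3-1}--\ref{0+5}. The engine for the reduction is the negative-index evaluation $(a;q)_{-n}=(-1)^{n}a^{-n}q^{{n+1\choose 2}}/(q/a;q)_{n}$. Applying it after the substitution $n\mapsto -n-1$ to the defining series of $g_{3,c}(x,q)$, the exponent $n^2+n$ is left invariant and the two shifted factorials collapse via $(x,q/x;q)_{-n}=q^{n^2}/(x,q/x;q)_{n}$, giving the clean decomposition
\begin{align*}
g_{3,c}(x,q)=g_3(x,q)+\sum_{n=0}^{\infty}(x,q/x;q)_n q^n .
\end{align*}
First I would record the analogous ``classical $+$ partner'' decompositions for each bilateral series occurring in \eqref{1+13}--\eqref{1+19}, i.e.\ for $\omega_c$, $\sigma_c$, $\beta_c$, $\mu_c$, $A_c$, $B_c$, $U_{0,c}$, $U_{1,c}$ and $V_{1,c}$, by the same reflection; a useful consistency check is that for $x=q$ in base $q^2$ this reproduces exactly \eqref{0-1}, since $\omega(q)=g_3(q,q^2)$ and the partner collapses to $\sum_{n\ge0}(q;q^2)_n^2q^{2n}$.

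With these decompositions in hand, each identity of Theorem~\ref{1+6} reduces to a purely one-sided statement among classical mock theta functions and theta quotients. For instance, after substituting $\omega_c(q)=\omega(q)+\tfrac{J_1^2}{J_2^2}D_5(q)$, identity \eqref{1+13} becomes the classical relation $2\varPhi(q)=q\tfrac{J_2^2}{J_1^2}\omega(q)+qD_5(q)$ connecting the sixth-order $\varPhi$ to the third-order $\omega$; similarly the partner of $\mu_c$ is to be identified (up to a theta factor) with $\omega(-q)$ for \eqref{1+16}. A helpful internal check is that \eqref{1+16} and \eqref{1+18} sum precisely to the corollary identity \eqref{0+4}, which pins down the common $\omega(-q)$ term. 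These reduced classical identities I would then prove by means of Appell--Lerch sums, writing both members through $m(x,q,z)$ and using the change-of-$z$ functional equation to cancel the non-modular pieces, exactly the mechanism announced for Section~\ref{sec1}; the eighth- and second-order cases \eqref{1+14}, \eqref{1+15}, \eqref{1+17}, \eqref{1+19} follow the same route using the $g_2$-, $K$-, $K_1$- and $S_2$-relations of Theorems~\ref{3+2}, \ref{5+1} and \ref{0+5}.

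The bulk of the remaining labour is theta-function bookkeeping: once the mock pieces cancel, every identity collapses to an equality between a power of $q$ times a ratio of the $J_m=(q^m;q^m)_\infty$ and a sum of such ratios. These I would verify by Jacobi's triple product together with the $_1\psi_1$ summation \eqref{0-5}, rewriting each $j(\,\cdot\,;q)$ as an infinite product and reducing to a finite eta-quotient identity of the kind the paper checks mechanically elsewhere.

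The main obstacle is that each identity couples classical and bilateral mock theta functions of \emph{different} orders --- the second-order $\mu$ with the third-order $\omega$ in \eqref{1+16}, the sixth-order $\varPhi$ with the third-order $\omega$ in \eqref{1+13}, and the eighth-order $V_1$ with the third-order $\psi$ in \eqref{1+19}. Making the partner sum of one order coincide with a mock theta function of another order is not a term-by-term matching; it forces a passage through the Appell--Lerch representations and the change-of-$z$ formula, so the genuine difficulty lies in organising those representations so that all mock contributions cancel and only the stated theta quotient survives. I expect the cross-order cases \eqref{1+13}, \eqref{1+16} and especially the eighth-order identity \eqref{1+19}, which carries the heaviest theta algebra, to be the most delicate steps.
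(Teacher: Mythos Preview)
Your plan is workable in spirit but takes a detour the paper avoids, and in doing so it leans on the wrong set of inputs.

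The paper does \emph{not} split each bilateral series into ``classical $+$ partner'' and then identify the partner case by case. Instead, it goes straight through the Proposition in Section~\ref{sec1}: formulas \eqref{2-5}, \eqref{2.2-add}, \eqref{3-3} express the bilateral two-parameter functions directly as Appell--Lerch sums $m(x,q,z)$, while \eqref{2-7}, \eqref{2-4}, \eqref{2-8} express $K_{1,c}$, $K_c$, $g_{2,c}$ in terms of the \emph{one-sided} $g_3$ plus explicit theta quotients. A single specialization of $x$ (namely $x=1,-1,i$, or $x=q$ in a rescaled base) then yields each line of the theorem with almost no computation: e.g.\ \eqref{1+13} is just $x=1$ in \eqref{2-5} matched against Mortenson's $S_2(x,q)=-m(x,q^2,q)$; \eqref{1+16} is $x=-1$ in \eqref{2-4}, since $K_c(-1,q)=\mu_c(q)$ and $g_3(-q,q^2)=\omega(-q)$; \eqref{1+14}, \eqref{1+15} come from $x=-1,i$ in \eqref{2-7}; \eqref{1+18}, \eqref{1+19} from $x=q$ in \eqref{2-8} after base changes.

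Your route, by contrast, first manufactures a partner series for each bilateral function and then hopes to recognise it as a theta multiple of a named mock theta function. That recognition is not elementary: the negative tail of $\mu_c$, for instance, is $\sum_{n\ge1}(-1;q^2)_n^2 q^{2n}/(q;q^2)_n$, which is not visibly a theta factor times $\omega(-q)$; proving that identification is essentially equivalent to \eqref{1+16} itself and would force you back to the Appell--Lerch representation anyway. Moreover, the tools you propose to invoke---Theorems~\ref{3+2}, \ref{5+1}, \ref{0+5}---relate bilateral series to other \emph{bilateral} series, so they do not supply the bilateral-to-classical bridge your reduction needs. The correct input is the Proposition (Mortenson's identities \eqref{2-4}, \eqref{2-7}, \eqref{2-8}), which already performs the ``partner extraction'' uniformly at the two-parameter level; once you use it, the partner decompositions and the cross-order identifications you flag as the delicate steps simply disappear.
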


\begin{Remark}
By means of \eqref{0-1}, one sees that identity \eqref{1+13} is equivalent to \cite[equation~(16)]{Hi}.
The difference between \eqref{0+4} and \eqref{1+16} yields \eqref{1+18}.
\end{Remark}

\section{Proofs of the main results} \label{sec1}
In this section, we shall give the concise proofs of Theorems \ref{bg3-1}, \ref{3+2}, \ref{5+1}, \ref{0+5}, and \ref{1+6} by using Appel--Lerch sums. First, we need some basic relations from \cite[equations~(4.1), (3.1), (5.12), (5.5), (5.18), (6.10)]{Mo}.
\begin{Proposition} We have
\begin{gather}
\label{2-5}
qg_{3,c}\bigl(xq,q^2\bigr)=-\frac{xj\bigl(xq;q^2\bigr)}{J_2}m\bigl(x^2,q^2,q/x\bigr),\\
R_c(x,q)=(1-x)\frac{j(x;q)}{J_1}m\bigl(x^2,q,1/x\bigr),\label{2.2-add}\\
2S_{2,c}(x,q)=-\frac{J_1j\bigl(xq;q^2\bigr)}{J_2^2}m(x,q,-1)
-\frac{J_1^5j\bigl(xq;q^2\bigr)}{2J_2^4j(-x;q)}+\frac{J_2^{10}j\bigl(-xq;q^2\bigr)}{J_1^4J_4^4j \bigl(xq;q^2\bigr)j\bigl(-x;q^2\bigr)}, \label{3-3}
\end{gather}
and
\begin{gather}\label{2-7}
(1-1/x)K_{1,c}(x,q)=\frac{j\bigl(xq;q^2\bigr)}{J_1}\bigl(1+xg_3\bigl(x,q^2\bigr)\bigr)
-\frac{J_1^4}{J_2^2j(x;q)},\\
\frac{1}{1-x}K_{c}(x,q)=-\frac{qj\bigl(x;q^2\bigr)}{xJ_1}g_3\bigl(xq,q^2\bigr)
+\frac{J_1^4}{J_2^2j(x;q)},\label{2-4}\\
g_{2,c}(x,q)=-\frac{j(x;q)}{2J_2}g_3(-x,q)
+\frac{J_2^{10}j\bigl(-x^2;q^2\bigr)}{2xJ_1^4J_4^4j\bigl(x^2;q^2\bigr)j\bigl(-x^2q;q^2\bigr)}\nonumber\\
\phantom{g_{2,c}(x,q)=}{}+\frac{J_2^4}{J_1^2j\bigl(x^2;q^2\bigr)}
-\frac{J_2^4j(x;q)}{2xJ_4^2j(-x;q)j\bigl(-x^2q;q^2\bigr)},\label{2-8}
\end{gather}
where
 \[g_3(x,q)=-x^{-1}m\bigl(q^2/x^3,q^3,x^2\bigr)-x^{-2}m\bigl(q/x^3,q^3,x^2\bigr).\]
\end{Proposition}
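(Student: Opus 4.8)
The six identities \eqref{2-5}--\eqref{2-8}, together with the representation of $g_3(x,q)$, are restatements of Mortenson's formulas, so the plan is to reprove them uniformly from the definitions of the bilateral series and of the Appell--Lerch sum $m(x,q,z)$. The common engine is that every object on either side is, as a function of the parameter $x$, meromorphic on $\mathbb{C}^{*}$ with only simple poles located at the zeros of the denominators $1-xq^{k}$, and with explicitly computable residues and quasi-periodicity under $x\mapsto q^{j}x$. I would therefore reduce each bilateral Eulerian series to an Appell--Lerch sum and compare the two sides as meromorphic functions of $x$.

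The first, and common, step is to re-index the negative tail. Writing $N_c=\sum_{n\ge 0}+\sum_{n<0}$ and putting $n\mapsto -m-1$ (or $n\mapsto -m$), the factorial reflection formula
\begin{align*}
(a;q)_{-m}=\frac{(-q/a)^{m}q^{\binom{m}{2}}}{(q/a;q)_{m}}
\end{align*}
turns each negative tail into a convergent one-sided series. For $g_{3,c}$ this already yields the clean identity
\begin{align*}
g_{3,c}(x,q)=g_{3}(x,q)+\sum_{m\ge 0}(x,q/x;q)_{m}q^{m},
\end{align*}
because $(x,q/x;q)_{-m}=q^{m^{2}}/(x,q/x;q)_{m}$; the analogous computation for $R_c$, $K_c$, $K_{1,c}$, $S_{2,c}$ and $g_{2,c}$ produces in each case a finite combination of the one-sided mock piece and an Eulerian tail.

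To reach the Appell--Lerch side I would use the stated representation
\begin{align*}
g_3(x,q)=-x^{-1}m\bigl(q^{2}/x^{3},q^{3},x^{2}\bigr)-x^{-2}m\bigl(q/x^{3},q^{3},x^{2}\bigr),
\end{align*}
which itself follows by writing the defining sum of $g_3$ as a Hecke-type double sum and applying the Appell reduction of Hickerson and Mortenson. Substituting the definition of $m$ and the formula of the previous paragraph, the bilateral series and the proposed right-hand side become meromorphic functions of $x$ with the same simple poles, dictated by the factors $1-xq^{k}$. I would then verify that the two sides have identical residues at every pole and transform the same way under $x\mapsto q^{j}x$; the difference is then a holomorphic function on $\mathbb{C}^{*}$ with the quasi-periodicity of a theta quotient, so a Liouville-type argument reduces the identity to checking a single value of $x$. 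For \eqref{2-5} and \eqref{2.2-add} this matching closes up with only the theta prefactor ($j(xq;q^{2})/J_2$, respectively $(1-x)j(x;q)/J_1$) surviving.

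For \eqref{3-3}, \eqref{2-7}, \eqref{2-4} and \eqref{2-8} the same skeleton applies, but now the residue computation leaves, beyond the Appell--Lerch sum, several explicit theta quotients, and the remaining task is to identify the Eulerian tails with these quotients. Here I would lean on the Jacobi triple product, the symmetries $j(z;q)=j(q/z;q)=-z\,j(z^{-1};q)$, the splitting of $j(z;q)$ by residues of the summation index modulo a small integer, and quintuple-product-type identities, to collapse the tails into the stated products of the $J_m$. The main obstacle is exactly this theta bookkeeping: confirming that the constellation of products generated by the residue computation equals the specific right-hand sides, with every power of $q$, every sign, and every $J_m$ normalization correct --- together with establishing the $g_3$ double-sum reduction in the first place. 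Since the intermediate expressions are long and error-prone, I would carry out the verification pole-by-pole, reduce each claim to a single theta identity, and cross-check the outcome numerically and against Mortenson's corresponding formulas.
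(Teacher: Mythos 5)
The first thing to say is that the paper offers no proof of this Proposition to compare against: all six identities, together with the quoted representation of $g_3(x,q)$, are imported verbatim from Mortenson \cite[equations~(4.1), (3.1), (5.12), (5.5), (5.18), (6.10)]{Mo}, so your attempt to prove them from scratch already goes beyond what the paper does. Your opening reduction is correct: the reflection formula $(a;q)_{-m}=(-q/a)^m q^{\binom{m}{2}}/(q/a;q)_m$ does give $(x,q/x;q)_{-m}=q^{m^2}/(x,q/x;q)_m$, hence $g_{3,c}(x,q)=g_3(x,q)+\sum_{m\geq 0}(x,q/x;q)_m q^m$, and the analogous splittings of $R_c$, $K_c$, $K_{1,c}$, $S_{2,c}$, $g_{2,c}$ work the same way. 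Moreover, the overall plan --- match poles, residues and behaviour under $x\mapsto qx$, then dispose of the difference by a Liouville-type argument --- is precisely the Hickerson--Mortenson machinery of \cite{HM}, which is how identities of this type are actually established in the source \cite{Mo}; so the route is viable in principle.

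That said, the two load-bearing steps of your sketch are asserted rather than established, and as stated the functional-equation step would not go through. The Appell--Lerch sum is not theta-quasi-periodic in $x$: it satisfies the \emph{inhomogeneous} relation $m(qx,q,z)=1-x\,m(x,q,z)$ (see \cite[equation~(3.2c)]{HM}, which the paper itself uses in proving Theorem~\ref{5+1}), and in \eqref{2-5} and \eqref{2.2-add} the third slot ($z=q/x$, respectively $z=1/x$) moves together with $x$, while in \eqref{2-7}--\eqref{2-8} the quoted representation of $g_3$ puts $x$ into all three slots of $m$. So before any comparison of transformation behaviour you must first normalize $z$ via the change-of-$z$ formula \eqref{2-6} --- an ingredient your proposal never invokes, although it is exactly what the paper's own proofs of Theorems~\ref{bg3-1}--\ref{0+5} run on --- and you must track the inhomogeneous terms, which do not cancel automatically. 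Second, the conclusion ``the difference is holomorphic on $\mathbb{C}^{*}$ with the quasi-periodicity of a theta quotient, so one value of $x$ suffices'' is too quick: a holomorphic $f$ on $\mathbb{C}^{*}$ with $f(qx)=cx^{-k}f(x)$ lies in a $k$-dimensional space of theta functions, so you must either show the normalized difference is genuinely elliptic (then constant by Liouville, and one evaluation closes it) or determine that dimension and check correspondingly many values, separately for each of the six identities. Both defects are repairable by standard arguments, and the deferred theta bookkeeping for \eqref{3-3}, \eqref{2-7}, \eqref{2-4}, \eqref{2-8} is the genuine bulk of the work; but as written the proposal is a program, not a proof.
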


We now ready to prove our theorems.

\begin{proof} [Proof of Theorem \ref{bg3-1}]
We first recall the relation follows from \cite[equation~(3.7)]{HM} (or see \cite[Chapter~8]{BFOR} and~\cite{Z})
\begin{align}\label{2-6}
m(x,q,z)-m(x,q,z_0)=\frac{z_0J_1^3j(z/z_0;q)j(xzz_0;q)}
{j(z_0;q)j(z;q)j(xz_0;q)j(xz;q)}.
\end{align}

In view of \eqref{2-5} and \eqref{2-6}, we find that
\begin{gather*}
j\bigl(-xq;q^2\bigr)g_{3,c}\bigl(xq,q^2\bigr)+j\bigl(xq;q^2\bigr)g_{3,c}\bigl(-xq,q^2\bigr)\\
\qquad=-\frac{xj\bigl(xq;q^2\bigr)j\bigl(-xq;q^2\bigr)}{qJ_2}\bigl(m\bigl(x^2,q^2,q/x\bigr)-m\bigl(x^2,q^2,-q/x\bigr)\bigr)=\frac{4J_4^5}{J_2^2j\bigl(x^2q^2;q^4\bigr)},
\end{gather*}
which equals to \eqref{bg3}.
\end{proof}
\begin{proof}[Proof of Theorem \ref{3+2}]
Combining \eqref{2-5} and \eqref{2.2-add},
we have
\begin{gather*}
\bigl(q/x,xq;q^2\bigr)_{\infty}R_c\bigl(x,q^2\bigr)
-\bigl(1/x,x;q^2\bigr)_{\infty}qg_{3,c}\bigl(xq,q^2\bigr)\\
\qquad=\bigl(q/x,xq;q^2\bigr)_{\infty}(1-x)\frac{j\bigl(x;q^2\bigr)}{J_2}m\bigl(x^2,q^2,1/x\bigr)\\
\phantom{\qquad=}{}+\bigl(1/x,x;q^2\bigr)_{\infty}\frac{x j\bigl(xq;q^2\bigr)}{J_2}m\bigl(x^2,q^2,q/x\bigr)\\
\qquad=(1-x)(q/x,x;q)_{\infty}m\bigl(x^2,q^2,1/x\bigr)+x(1/x,x;q)_{\infty}m\bigl(x^2,q^2,1/x\bigr)\\
\phantom{\qquad=}{}+\frac{(1/x,x;q)_{\infty}J_2^3j\bigl(q;q^2\bigr)j\bigl(q;q^2\bigr)}
{j\bigl(1/x;q^2\bigr)j\bigl(q/x;q^2\bigr)j\bigl(xq;q^2\bigr)j\bigl(x;q^2\bigr)}\\
\qquad=\frac{\bigl(q;q^2\bigr)_{\infty}^4\bigl(q^2;q^2\bigr)_{\infty}}{(q/x,xq;q)_{\infty}},
\end{gather*}
where the second identity follows from \eqref{2-6}. This proves \eqref{1+2}.
\end{proof}
\begin{proof}[Proof of Theorem \ref{5+1}]
By \eqref{2-5} and \cite[(3.2c)]{HM},
$
m(qx,q,z)=1-xm(x,q,z)$,
and we have
\begin{gather}
xj\bigl(xq;q^2\bigr)g_c\bigl(x,q^2\bigr)+x^{-1}qj\bigl(x;q^2\bigr)g_c\bigl(xq,q^2\bigr)\nonumber\\
\qquad=\frac{J_2j(x;q)}{J_1}\bigl(m\bigl(x^2,q^2,q^2/x\bigr)-1-m\bigl(x^2,q^2,q/x\bigr)\bigr).\label{4-5}
\end{gather}
We further employ \eqref{2-6} to obtain
\begin{align*}
m\bigl(x^2,q^2,q^2/x\bigr)-m\bigl(x^2,q^2,q/x\bigr)=\frac{J_1^6}{J_2^3j(x;q)^2}.
\end{align*}
Inserting the above result into \eqref{4-5}, we get \eqref{5+2}.
\end{proof}

\begin{proof}[Proof of Theorem \ref{0+5}]

We use \eqref{2-7} as well as \eqref{2-4} to obtain \eqref{1+3}. Similarly, we employ~\eqref{2-4} and \eqref{2-8} to conclude \eqref{1+4}.

By \eqref{2-6}, we have
\begin{align*}
m\bigl(x^2,q^2,q/x\bigr)=m\bigl(x^2,q^2,-1\bigr)
-\frac{J_2^4j\bigl(-xq;q^2\bigr)^2}{2J_4^2j\bigl(-x^2;q^2\bigr)j\bigl(xq;q^2\bigr)^2}.
\end{align*}
Combining \eqref{2-5}, \eqref{3-3} and the above result yields \eqref{1+9}.
\end{proof}

\begin{proof}[Proof of Theorem \ref{1+6}.] Putting $x=1$ in \eqref{2-5}, we deduce
\smash{$
q\omega_c(q)=-\frac{J_1^2}{J_2^2}m\bigl(1,q^2,q\bigr)$}.
In view of \cite[equation~(2.12)]{Mo1}
$
S_2(x,q)=-m\bigl(x,q^2,q\bigr)$,
one sees that
$
2\varPhi(q)=-m\bigl(1,q^2,q\bigr)$.
The result~\eqref{1+13} follows readily.

Using \eqref{2-5}, we obtain
\begin{align}\label{4-1}
\sigma_c(q)=\frac{J_2J_3}{qJ_1J_6}m\bigl(q,q^3,-q\bigr).
\end{align}
Examining the above equality and \cite[equation~(5.29)]{HM}
\begin{align*}
\gamma(q)=3m\bigl(q,q^3,-q\bigr)+\frac{J_1^5J_6}{J_2^3J_3^2},
\end{align*}
we conclude \eqref{1+7}. By proceeding as above, identity \eqref{1+8} follows from
\begin{align}\label{4-2}
\beta_c(q)=-\frac{J_1}{J_3}m\bigl(q,q^3,q\bigr),
\end{align}
and the fact \cite[equation~(5.30)]{HM}
\begin{align*}
\phi_-(q)=-m\bigl(q,q^3,q\bigr)-q\frac{J_6^6}{J_2^2J_3^3}.
\end{align*}

Letting $x=-1$ in \eqref{2-7} gives
\begin{align*}
2K_{1,c}(-1,q)=\frac{J_2^5}{J_1^3J_4^2}\bigl(1-g_3\bigl(-1,q^2\bigr)\bigr)-\frac{J_1^5}{2J_2^4}.
\end{align*}
Owing to the fact that
$
 K_{1,c}(-1,q)=-A_c(-q)$,
$f(q)= 2-2g_3(-1,q)$,
we have
\begin{align*}
-4A_c(-q)=\frac{J_2^5}{J_1^3J_4^2}f\bigl(q^2\bigr)-\frac{J_1^5}{J_2^4}.
\end{align*}
The result \eqref{1+14} follows readily. Similarly, we set $x=i$ in \eqref{2-7} to obtain
\begin{align*}
(1+i)K_{1,c}(i,q)=\frac{j\bigl(iq;q^2\bigr)}{J_1}\bigl(1+ig_3(i,q^2)\bigr)
-\frac{J_1^4}{J_2^2j(i;q)}.
\end{align*}
Using the fact that
$K_{1,c}(i,q)=-U_{1,c}(-q)$,
$\phi(q)= (1-i)\bigl(1+ig_3(i,q)\bigr)$,
yields \eqref{1+15} by performing further calculations.

In a similar manner, taking $x=-1$ and $i$ in \eqref{2-4} and after simplifying, we deduce \eqref{1+16} and \eqref{1+17}.
Replacing $q$ by $q^2$ and letting $x=q$ in \eqref{2-8}, we get \eqref{1+18}. Replacing $q$ by $q^4$ and letting $x=q$ in \eqref{2-8}, we have \eqref{1+19}.
\end{proof}

\begin{Remark}
We employ \eqref{2-6} to deduce
\begin{align*}
m\bigl(q,q^3,-q\bigr)=m\bigl(q,q^3,q\bigr)+\frac{4qJ_6^6}{J_2^2J_3^3}.
\end{align*}
Together the above equality with \eqref{4-1} as well as \eqref{4-2} yields the last identity in Corollary \ref{coro1}.
On the other hand, letting $x=1$ in \eqref{3-3}, we obtain
\begin{align*}
4\varPhi_c(q)=-\frac{J_1^3}{J_2^3}m(1,q,-1)+\frac{J_2^{17}}{2J_1^8J_4^8}
-\frac{J_1^8}{4J_2^7}.
\end{align*}
Combining the above identity with the following due to Hu et al.~\cite{HSZ}
\begin{align*}%\label{0-2}
f_c(q)=4\frac{J_2^2}{J_1^2}m(1,q,-1),
\end{align*}
we arrive at
\begin{align}\label{1+11}
16\varPhi_c(q)+\frac{J_1^5}{J_2^5}f_c(q)=2\frac{J_2^{17}}{J_1^8J_4^8}
-\frac{J_1^8}{J_2^7}.
\end{align}
It is evident that the difference between \eqref{1+12} and \eqref{1+11} yields \eqref{3-1}.
\end{Remark}

\appendix

\section[Proofs of Theorems 2.3 and 2.6]{Proofs of Theorems \ref{3+2} and \ref{5+1}}\label{appendixA}

In this appendix, we shall reprove Theorem \ref{3+2} by using the bilateral basic hypergeometric series and reprove Theorem \ref{5+1} based on a transformation lemma due to Andrews \cite{A}.

\subsection[Second proof of Theorem 2.3]{Second proof of Theorem \ref{3+2}} \label{sec2}

\begin{proof}
Recall that the transformation formula due to Bailey \cite[equation~(2.3)]{Ba}
\begin{align*}
_2\psi_2\left[{a,b \atop c,d};q,z\right]
=\frac{(az,d/a,c/b,dq/abz;q)_{\infty}}
{(z,d,q/b,cd/abz;q)_{\infty}}
{_2\psi_2}\left[{a,abz/d \atop az,c};q,d/a\right].
\end{align*}
We let $(q,a,b,c,d,z)\rightarrow\bigl(q^2,a^2,b^2,abq,-abq,-q^2/z\bigr)$ to obtain
\begin{gather*}
{_2\psi_2}\left[{a^2, b^2 \atop abq,-abq};q^2,-q^2/z\right]\\
\qquad=\frac{\bigl(-a^2q^2/z,-bq/a,aq/b,zq/ab;q^2\bigr)_{\infty}}
{\bigl(-q^2/z,-abq,q^2/b^2,z;q^2\bigr)_{\infty}}
{_2\psi_2}\left[{a^2,abq/z \atop -a^2q^2/z,abq};q^2,-bq/a\right].
\end{gather*}
Substituting the above identity into the following result from \cite[Theorem 2.3]{HZ}
\begin{gather}
\bigl(z;q^2\bigr)_{\infty}{_2\psi_2}\left[{a^2, b^2 \atop abq,-abq};q^2,-q^2/z\right]\nonumber\\
\qquad=\frac{\bigl(a^2q^2,b^2q^2;q^4\bigr)_{\infty}}{\bigl(a^2b^2q^2,q^2;q^4\bigr)_{\infty}}
\sum_{n=-\infty}^{\infty}\frac{q^{4n^2}(z/ab)^{2n}}
{\bigl(q^4/a^2,q^4/b^2;q^4\bigr)_n}\nonumber\\
\phantom{\qquad=}{}-\frac{q^2}{z}\frac{\bigl(a^2,b^2;q^4\bigr)_{\infty}}{\bigl(a^2b^2q^2,q^2;q^4\bigr)_{\infty}}
\sum_{n=-\infty}^{\infty}\frac{q^{4n^2-4n}(z/ab)^{2n}}
{\bigl(q^2/a^2,q^2/b^2;q^4\bigr)_n},\label{2-2}
\end{gather}
we find that
\begin{gather}
\frac{\bigl(-a^2q^2/z,-bq/a,aq/b,zq/ab;q^2\bigr)_{\infty}}
{\bigl(-q^2/z,-abq,q^2/b^2;q^2\bigr)_{\infty}}
{_2\psi_2}\left[{a^2,abq/z \atop -a^2q^2/z,abq};q^2,-bq/a\right]\nonumber\\
\qquad=\frac{\bigl(a^2q^2,b^2q^2;q^4\bigr)_{\infty}}{\bigl(a^2b^2q^2,q^2;q^4\bigr)_{\infty}}
\sum_{n=-\infty}^{\infty}\frac{q^{4n^2}(z/ab)^{2n}}
{\bigl(q^4/a^2,q^4/b^2;q^4\bigr)_n}\nonumber\\
\phantom{\qquad=}{}-\frac{q^2}{z}\frac{\bigl(a^2,b^2;q^4\bigr)_{\infty}}{\bigl(a^2b^2q^2,q^2;q^4\bigr)_{\infty}}
\sum_{n=-\infty}^{\infty}\frac{q^{4n^2-4n}(z/ab)^{2n}}
{\bigl(q^2/a^2,q^2/b^2;q^4\bigr)_n}.\label{2-3}
\end{gather}
Putting $z=1$, $a^2=1/x$ and $b^2=x$ in \eqref{2-3} gives
\begin{gather}
\frac{\bigl(-q^2/x,-xq,q/x,q;q^2\bigr)_{\infty}}{\bigl(-q^2,-q,q^2/x;q^2\bigr)_{\infty}}
{_1\psi_1}\left[{1/x \atop -q^2/x};q^2,-xq\right]\nonumber\\
\qquad=\frac{\bigl(q^2/x,xq^2;q^4\bigr)_{\infty}}{\bigl(q^2;q^4\bigr)_{\infty}^2}
\sum_{n=-\infty}^{\infty}\frac{q^{4n^2}}{\bigl(xq^4,q^4/x;q^4\bigr)_n}\nonumber\\
\phantom{\qquad=}{}-\frac{q^2\bigl(1/x,x;q^4\bigr)_{\infty}}{\bigl(q^2;q^4\bigr)_{\infty}^2}
\sum_{n=-\infty}^{\infty}\frac{q^{4n^2-4n}}{\bigl(xq^2,q^2/x;q^4\bigr)_n}.\label{2-1}
\end{gather}
By means of \eqref{0-5}, we rewrite \eqref{2-1} as
\begin{align*}
\frac{\bigl(q^2;q^2\bigr)_{\infty}\bigl(q^2;q^4\bigr)_{\infty}}{\bigl(q^2/x,xq^2;q^2\bigr)_{\infty}}
=\frac{\bigl(q^2/x,xq^2;q^4\bigr)_{\infty}}{\bigl(q^2;q^4\bigr)_{\infty}^2}R_c\bigl(x,q^4\bigr)
-\frac{q^2\bigl(1/x,x;q^4\bigr)_{\infty}}{\bigl(q^2;q^4\bigr)_{\infty}^2}g_{3,c}\bigl(xq^2,q^4\bigr).
\end{align*}
We further replace $q^2$ by $q$ to obtain \eqref{1+2}.
\end{proof}

\begin{Remark}
It needs to be pointed out that when $z=1$, the left hand side of \eqref{2-2} is not equal to $0$, actually, it equals to the corresponding theta series from the perspective of \eqref{2-3}. In light of this, we list identity \eqref{3-1} as the revised version of \cite[Corollary 2.4]{HZ}. Likewise, one can discuss the identities in \cite[Corollaries 2.6--2.9]{HZ} and derive the right results as that of \eqref{3-1}, we omit the details here.
\end{Remark}

\subsection[Second proof of Theorem 2.6]{Second proof of Theorem \ref{5+1}} \label{sec3}
\begin{proof}
 Recall the following transformation lemma from Andrews \cite{A}: subject to suitable convergence conditions, if
\begin{align}\label{1-1}
c_{n}=\sum_{m=0}^{\infty}a_{m+n}b_{m},
\end{align}
then
\begin{align}\label{1-2}
\sum_{m=0}^{\infty}b_{m}\sum_{n=-\infty}^{\infty}a_{n}
=\sum_{n=-\infty}^{\infty}c_{n}.
\end{align}
We set
\begin{align*}
a_n=\frac{z^nq^{\frac{n^2}{2}}}{(c;q)_n} \qquad \text{and} \qquad
b_m=\frac{c^m(a,1/a;q)_m}{\bigl(zq^{\frac{1}{2}}\bigr)^m(q,-q;q)_m}
\end{align*}
in \eqref{1-1} to get
\begin{align*}
c_n={}&\sum_{m=0}^{\infty}a_{n+m}b_m
=\sum_{m=0}^{\infty}\frac{z^{n+m}q^{\frac{(n+m)^2}{2}}c^m(a,1/a;q)_m}
{(c;q)_{n+m}\bigl(zq^{\frac{1}{2}}\bigr)^m(q,-q;q)_m}\\
={}&\frac{z^nq^{\frac{n^2}{2}}}{(c;q)_n}\sum_{m=0}^{\infty}
\frac{(a,1/a;q)_mq^{m\choose2}(cq^n)^m}
{(q,-q,cq^n;q)_m}\\
={}&\frac{z^nq^{\frac{n^2}{2}}}{(c;q)_n}
\bigg\{\frac{\bigl(acq^{n+1},cq^n/a;q^2\bigr)_{\infty}
+a\bigl(acq^n,cq^{n+1}/a;q^2\bigr)_{\infty}}{(1+a)(cq^n;q)_{\infty}}\bigg\}\\
={}&\frac{z^nq^{\frac{n^2}{2}}}{(1+a)(c;q)_{\infty}}
\big\{\bigl(acq^{n+1},cq^n/a;q^2\bigr)_{\infty}
+a\bigl(acq^n,cq^{n+1}/a;q^2\bigr)_{\infty}\big\},
\end{align*}
where the penultimate identity follows from the equality \cite[Example~8]{ZS2}
\begin{align*}
\sum_{n=0}^{\infty}\frac{(a,1/a;q)_nq^{n\choose2}c^n}{\bigl(q^2;q^2\bigr)_n(c;q)_n}
=\frac{\bigl(qac,c/a;q^2\bigr)_{\infty}+a\bigl(ac,qc/a;q^2\bigr)_{\infty}}{(1+a)(c;q)_{\infty}}.
\end{align*}
We then employ \eqref{0-5} to get
\begin{align*}
\sum_{n=-\infty}^{\infty}a_n=\frac{\bigl(q,-zq^{\frac{1}{2}},-q^{\frac{1}{2}}/z;q\bigr)_{\infty}}
{\bigl(c,-c/zq^{\frac{1}{2}};q\bigr)_{\infty}}.
\end{align*}
With the aid of \eqref{1-2}, it follows that
\begin{gather*}
\frac{1}{1+a}\sum_{n=-\infty}^{\infty}
\bigl(acq^{n+1},cq^n/a;q^2\bigr)_{\infty}z^nq^{\frac{n^2}{2}}+\frac{a}{1+a}\sum_{n=-\infty}^{\infty}
\bigl(acq^n,cq^{n+1}/a;q^2\bigr)_{\infty}z^nq^{\frac{n^2}{2}}\\
\qquad=\frac{\bigl(q,-zq^{\frac{1}{2}},-q^{\frac{1}{2}}/z;q\bigr)_{\infty}}{\bigl(-c/zq^{\frac{1}{2}};q\bigr)_{\infty}}
\sum_{m=0}^{\infty}\frac{(a,1/a;q)_m\bigl(c/zq^{\frac{1}{2}}\bigr)^m}{(q,-q;q)_m}.
\end{gather*}
Decomposing the sum according to the parity of $n$,
and then setting $a=x/q^{\frac{1}{2}}$, $c=q^{\frac{3}{2}}$, $z=-1$, we are led to
\begin{gather*}
\frac{\bigl(xq^2,q^2/x;q^2\bigr)_{\infty}}{1+x/q^{\frac{1}{2}}}\sum_{n=-\infty}^{\infty}
\frac{q^{2n^2}}{\bigl(xq^2,q^2/x;q^2\bigr)_n}-\frac{q^{\frac{1}{2}}\bigl(xq^3,q^3/x;q^2\bigr)_{\infty}}{1+x/q^{\frac{1}{2}}}\sum_{n=-\infty}^{\infty}
\frac{q^{2n^2+2n}}{\bigl(xq^3,q^3/x;q^2\bigr)_n}\\
\qquad+\frac{x\bigl(xq,q^3/x;q^2\bigr)_{\infty}}{q^{\frac{1}{2}}\bigl(1+x/q^{\frac{1}{2}}\bigr)}\sum_{n=-\infty}^{\infty}
\frac{q^{2n^2}}{\bigl(xq,q^3/x;q^2\bigr)_n}-\frac{x\bigl(xq^2,q^4/x;q^2\bigr)_{\infty}}{1+x/q^{\frac{1}{2}}}\sum_{n=-\infty}^{\infty}
\frac{q^{2n^2+2n}}{\bigl(xq^2,q^4/x;q^2\bigr)_n}\\
\phantom{\qquad+}{}=\bigl(q^{\frac{1}{2}};q\bigr)_{\infty}^2
\sum_{m=0}^{\infty}\frac{\bigl(x/q^{\frac{1}{2}},q^{\frac{1}{2}}/x;q\bigr)_m(-q)^m}{(q,-q;q)_m}.
\end{gather*}
Using $q$-Gauss sum \cite[Appendix (II.8)]{GR}
\begin{align*}
\sum_{n=0}^{\infty}\frac{(a,b;q)_n(c/ab)^n}{(q,c;q)_n}
=\frac{(c/a,c/b;q)_{\infty}}{(c,c/ab;q)_{\infty}}
\end{align*}
on the right-hand side of the above identity and simplifying, we have
\begin{gather*}%\label{4-3}
x^{-1}q^{\frac{1}{2}}\bigl(xq^2,q^2/x;q^2\bigr)_{\infty}R_c\bigl(x,q^2\bigr)
-x^{-1}q\bigl(xq,q/x;q^2\bigr)_{\infty}g_c\bigl(xq,q^2\bigr) \\
\qquad+\bigl(xq,q^3/x;q^2\bigr)_{\infty}R_c\bigl(x/q,q^2\bigr)
-q^{\frac{1}{2}}\bigl(x,q^2/x;q^2\bigr)_{\infty}g_c\bigl(x,q^2\bigr) \\
\phantom{\qquad+}{}=\bigl(q^{\frac{1}{2}};q\bigr)_{\infty}^2\bigl(q;q^2\bigr)_{\infty}^2
\bigl(-xq^{\frac{1}{2}},-q^{\frac{1}{2}}/x;q\bigr)_{\infty}.
\end{gather*}
Utilizing \eqref{1+2}, the above identity can be rewritten as
\begin{gather}
\left(\frac{x^{-1}q^{\frac{3}{2}}\bigl(x,xq^2,1/x,q^2/x;q^2\bigr)_{\infty}}
{\bigl(xq,q/x;q^2\bigr)_{\infty}}
-x^{-1}q\bigl(xq,q/x;q^2\bigr)_{\infty}\right)g_c\bigl(xq,q^2\bigr)\nonumber\\
\qquad+\left(\frac{q\bigl(x/q,xq,q/x,q^3/x;q^2\bigr)_{\infty}}
{\bigl(x,q^2/x;q^2\bigr)_{\infty}}
-q^{\frac{1}{2}}\bigl(x,q^2/x;q^2\bigr)_{\infty}\right)g_c\bigl(x,q^2\bigr)\nonumber\\
\phantom{\qquad+}{}=\bigl(q^{\frac{1}{2}};q\bigr)_{\infty}^2\bigl(q;q^2\bigr)_{\infty}^2
\bigl(-xq^{\frac{1}{2}},-q^{\frac{1}{2}}/x;q\bigr)_{\infty}\nonumber\\
\phantom{\qquad+=}{}-\frac{x^{-1}q^{\frac{1}{2}}\bigl(q;q^2\bigr)_{\infty}^4\bigl(q^2;q^2\bigr)_{\infty}}
{\bigl(xq,q/x;q^2\bigr)_{\infty}^2}
-\frac{\bigl(q;q^2\bigr)_{\infty}^4\bigl(q^2;q^2\bigr)_{\infty}}{\bigl(x,q^2/x;q^2\bigr)_{\infty}^2}. \label{4-4}
\end{gather}
Recall from \cite[equation~(2.4b)]{HM} that
\begin{align*}
j\bigl(-ab;q^2\bigr)j\bigl(-qb/a;q^2\bigr)-aj\bigl(-qab;q^2\bigr)j\bigl(-b/a;q^2\bigr)=j(a;q)j(b;q).
\end{align*}
We put $(a,b)\rightarrow \bigl(q^{\frac{1}{2}},-xq^{\frac{1}{2}}\bigr)$, $\bigl(q^{\frac{1}{2}},-q^{\frac{3}{2}}/x\bigr)$, $\bigl(-q^{\frac{1}{2}}/x,q^{\frac{1}{2}}\bigr)$ to derive
\begin{gather*}
q^{\frac{1}{2}}\bigl(x,xq^2,1/x,q^2/x;q^2\bigr)_{\infty}
-\bigl(xq,q/x;q^2\bigr)_{\infty}^2
=-\frac{j\bigl(q^{\frac{1}{2}};q\bigr)j\bigl(-xq^{\frac{1}{2}};q\bigr)}{J_2^2},\\
q^{\frac{1}{2}}\bigl(x/q,xq,q/x,q^3/x;q^2\bigr)_{\infty}
-\bigl(x,q^2/x;q^2\bigr)_{\infty}^2
=-\frac{j\bigl(q^{\frac{1}{2}};q\bigr)j\bigl(-q^{\frac{3}{2}}/x;q\bigr)}{J_2^2},\\
x^{-1}q^{\frac{1}{2}}\bigl(x,q^2/x;q^2\bigr)_{\infty}^2
+\bigl(xq,q/x;q^2\bigr)_{\infty}^2
=\frac{j\bigl(q^{\frac{1}{2}};q\bigr)j\bigl(-xq^{\frac{1}{2}};q\bigr)}{J_2^2}.
\end{gather*}
Substituting the above three identities into \eqref{4-4}, we have
\begin{gather*}
-\frac{x^{-1}qj\bigl(q^{\frac{1}{2}};q\bigr)j\bigl(-xq^{\frac{1}{2}};q\bigr)}
{\bigl(xq,q/x;q^2\bigr)_{\infty}J_2^2}g_{3,c}\bigl(xq,q^2\bigr)
-\frac{q^{\frac{1}{2}}j\bigl(q^{\frac{1}{2}};q\bigr)j\bigl(-q^{\frac{3}{2}}/x;q\bigr)}
{\bigl(x,q^2/x;q^2\bigr)_{\infty}J_2^2}g_{3,c}\bigl(x,q^2\bigr)\\
\qquad=\bigl(q^{\frac{1}{2}};q\bigr)_{\infty}^2\bigl(q;q^2\bigr)_{\infty}^2
\bigl(-xq^{\frac{1}{2}},-q^{\frac{1}{2}}/x;q\bigr)_{\infty}
-\frac{\bigl(q;q^2\bigr)_{\infty}^4j\bigl(q^{\frac{1}{2}};q\bigr)j\bigl(-xq^{\frac{1}{2}};q\bigr)}
{(x,q/x;q)_{\infty}^2J_2}.
\end{gather*}
The desired result \eqref{5+2} follows by the product rearrangements. Thus we complete the proof.
\end{proof}

\section{Bilateral series of mock theta functions}\label{appendixB}

In this appendix, we present our results regarding bilateral mock theta functions (extracted from the corollaries) in a tabulated list, categorized by the orders of classical mock theta functions. Below the corresponding entries, we also list the relations between these bilateral mock theta functions themselves as well as their relations with classical mock theta functions.
\subsection{Bilateral series of order 2 mock theta functions}
\begin{gather*}
A_c(q):=\sum_{n=-\infty}^{\infty}\frac{\bigl(-q;q^2\bigr)_n q^{(n+1)^2}}{\bigl(q;q^2\bigr)_{n+1}^2},\qquad
B_c(q):=\sum_{n=-\infty}^{\infty}\frac{\bigl(-q^2;q^2\bigr)_n q^{n^2+n}}{\bigl(q;q^2\bigr)_{n+1}^2},\\
\mu_c(q):=\sum_{n=-\infty}^{\infty}\frac{(-1)^n\bigl(q;q^2\bigr)_nq^{n^2}}{\bigl(-q^2;q^2\bigr)_n^2}.
\end{gather*}
Relations
\begin{gather*}
\mu_c(q)+8q\frac{J_4^3}{J_1^3}B_c(q)=8q\frac{J_4^8}{J_1^3J_2^4}
+2\frac{J_4^{20}}{J_1^3J_2^8J_8^8}+\frac{J_1^5}{J_2^4}
-2\frac{J_1J_4^{10}}{J_2^6J_8^4},\\
8q\frac{J_4^3}{J_1^3}B_c(q)+4q\frac{J_4^2}{J_1J_2}\omega(-q)=2\frac{J_4^{20}}{J_1^3J_2^8J_8^8}
+8q\frac{J_4^8}{J_1^3J_2^4}-2\frac{J_1J_4^{10}}{J_2^6J_8^4},\\
\frac{J_2^5}{J_1^3J_4^2}f\bigl(q^2\bigr)+4A_c(-q)=\frac{J_1^5}{J_2^4},\qquad\mu_c(q)-4q\frac{J_4^2}{J_1J_2}\omega(-q)=\frac{J_1^5}{J_2^4}.
\end{gather*}

\subsection{Bilateral series of order 3 mock theta functions}
\begin{alignat*}{3}
& f_c(q):= \sum_{n=-\infty}^{\infty}\frac{q^{n^2}}{(-q;q)_n^2},\qquad&&
 \phi_c(q):= \sum_{n=-\infty}^{\infty}\frac{q^{n^2}}{\bigl(-q^2;q^2\bigr)_n},&\\
&\chi_c(q):= \sum_{n=-\infty}^{\infty}\frac{(-q;q)_nq^{n^2}}{\bigl(-q^3;q^3\bigr)_n},\qquad&&
 \omega_c(q):= \sum_{n=-\infty}^{\infty}\frac{q^{2n^2+2n}}{\bigl(q;q^2\bigr)_{n+1}^2},&\\
&\nu_c(q):= \sum_{n=-\infty}^{\infty}\frac{q^{n^2+n}}{\bigl(-q;q^2\bigr)_{n+1}},\qquad&&
 \rho_c(q):= \sum_{n=-\infty}^{\infty}\frac{\bigl(q;q^2\bigr)_{n+1}q^{2n^2+2n}}{\bigl(q^3;q^6\bigr)_{n+1}},&\\
&\xi_c(q):= 1+2\sum_{n=-\infty}^{\infty}\frac{q^{6n^2-6n+1}}{\bigl(q,q^5;q^6\bigr)_n},\qquad&&
 \sigma_c(q):= \sum_{n=-\infty}^{\infty}\frac{q^{3n^2-3n}}{\bigl(-q,-q^2;q^3\bigr)_n}.&
\end{alignat*}
Relations
\begin{alignat*}{3}
 &\omega_c(q)+\frac{J_1^4J_4^2}{J_2^6}\omega_c(-q)=4\frac{J_1^2J_4^8}{J_2^9}, \qquad&&
 \rho_c(q)+\frac{J_2^3J_3^2J_{12}}{J_1^2J_4J_6^3}\rho_c(-q)=4\frac{J_3J_4^2J_{12}^2}{J_1J_6^3},&\\
& f_c\bigl(q^2\bigr)-4q\frac{J_1^2J_4^4}{J_2^6}\omega_c(-q)
=\frac{J_1^8J_4^2}{J_2^9},\qquad&&
 \phi_c\bigl(q^2\bigr)-2q\frac{J_2J_8^2}{J_4^3}\nu_c\bigl(q^2\bigr)
=\frac{J_1^4J_8}{J_2J_4^3}, &\\
& \chi_c\bigl(q^2\bigr)-q\frac{J_2^3J_3J_{12}^2}{J_1J_4^2J_6^3}\rho_c(-q)
=\frac{J_1^2J_3^2J_{12}}{J_4J_6^3},\qquad&&
 2\frac{J_1^2}{J_2^2}\varPhi(q)=q\omega_c(q),&\\
& \frac{J_2J_3}{J_1J_6}\gamma(q)-3q\sigma_c(q)=\frac{J_1^4}{J_2^2J_3}.&&&
\end{alignat*}

\subsection{Bilateral series of order 6 mock theta functions}
\begin{gather*}
\phi_{-,c}(q):= \sum_{n=-\infty}^{\infty}\frac{(-q;q)_{2n-1}q^n}{\bigl(q;q^2\bigr)_n},\qquad
 \beta_c(q):= \sum_{n=-\infty}^{\infty}\frac{q^{3n^2+3n+1}}{\bigl(q,q^2;q^3\bigr)_{n+1}},\\
\varPhi_c(q):= \sum_{n=-\infty}^{\infty}\frac{(-q;q)_{2n}q^{n+1}}{\bigl(q;q^2\bigr)_{n+1}^2}.
\end{gather*}
Relations
\begin{gather*}
  \sigma_c(q)+\frac{J_2J_3^2}{qJ_1^2J_6}\beta_c(q)=4\frac{J_6^5}{J_1J_2J_3^2},\qquad
 16\varPhi_c\bigl(q^2\bigr)-4q\frac{J_2^5}{J_1^2J_4^3}\omega_c(q)
=2\frac{J_4^{17}}{J_2^8J_8^8}-\frac{J_2^{20}}{J_1^8J_4^{11}},\\
 \beta_c(q)-\frac{J_1}{J_3}\phi_-(q)=q\frac{J_1J_6^6}{J_2^2J_3^4}.
\end{gather*}

\subsection{Bilateral series of order 8 mock theta functions}

\begin{gather*}
U_{0,c}(q):= \sum_{n=-\infty}^{\infty}\frac{\bigl(-q;q^2\bigr)_nq^{n^2}}{\bigl(-q^4;q^4\bigr)_n},\qquad
 U_{1,c}(q):= \sum_{n=-\infty}^{\infty}\frac{\bigl(-q;q^2\bigr)_nq^{(n+1)^2}}
{\bigl(-q^2;q^4\bigr)_{n+1}},\\
V_{1,c}(q):= \sum_{n=-\infty}^{\infty}\frac{\bigl(-q^4;q^4\bigr)_n q^{2n^2+2n+1}}{\bigl(q;q^2\bigr)_{2n+2}}.
\end{gather*}
Relations
\begin{gather*}
 \frac{J_4^2}{J_1J_8}\phi\bigl(q^2\bigr)+2U_{1,c}(-q)=\frac{J_1^3}{J_2J_4},\qquad
 U_{0,c}(-q)-2q\frac{J_2J_8}{J_1J_4}\nu\bigl(q^2\bigr)=\frac{J_1^3}{J_2J_4},\\
 2V_{1,c}(q)-\frac{J_1J_4}{J_2J_8}\psi(-q)=\frac{J_8^9}{J_2J_4^3J_{16}^4}+2q\frac{J_8^3}{J_2J_4}-\frac{J_1^2J_8^4}{J_2^2J_4J_{16}^2}.
\end{gather*}

\subsection{Bilateral series of other mock theta functions}
\begin{gather*}
M_c(q):= \sum_{n=-\infty}^{\infty}\frac{q^{n^2+n}}{(-1,-q;q)_{n+1}},\qquad
N_c(q):=\sum_{n=-\infty}^{\infty}\frac{q^{6n^2+6n+1}}{\bigl(q,q^5;q^6\bigr)_{n+1}},\\
R_{2,c}(q):= \sum_{n=-\infty}^{\infty}\frac{(-1)^n\bigl(q;q^2\bigr)_n(1+q)q^{n^2+2n}}
{\bigl(-q^2;q^2\bigr)_n\bigl(-q^2;q^2\bigr)_{n+1}}.
\end{gather*}
Relations
\begin{gather*}
 M_c\bigl(q^2\bigr)+2q\frac{J_1^2J_4^4}{J_2^6}\omega_c(-q)
=2\frac{J_4^2}{J_2^2}-\frac{J_1^8J_4^2}{2J_2^9},\\
 q^2\sigma_c\bigl(q^2\bigr)-\frac{J_1J_4^2J_6^3}{J_2^3J_3J_{12}^2}N_c(-q)
=\frac{J_4J_6}{J_2J_{12}}-\frac{J_1^2J_3^2J_4}{J_2^3J_{12}},\\
 \beta_c\bigl(q^2\bigr)+\frac{J_2^2J_3}{J_1J_6^2}N_c(q)
=\frac{J_2J_3^6}{J_1^2J_6^4}-\frac{J_2}{J_6},\\
 \mu_c(q)+R_{2,c}(q)=4\frac{J_4^2}{J_1J_2},\qquad
 f_c(q)+2M_c(q)=4\frac{J_2^2}{J_1^2}.
\end{gather*}

\subsection*{Acknowledgements}
The author would like to express sincere gratitude to the anonymous referees, whose insightful comments and constructive suggestions have significantly enhanced the quality and clarity of this paper.
The work is supported by the National Natural Science Foundation of China (Grant No.~12571354), the Shanghai Rising-Star Program (Grant No.~23QA1407300), and the China Scholarship Council (CSC).

\pdfbookmark[1]{References}{ref}
\LastPageEnding

\end{document}